\journal{Theoretical Computer Science}
\numberwithin{equation}{section}
\newcommand{\R}{\mathbb{R}}  
\renewcommand{\P}{\mathbb{P}}  
\newcommand{\Z}{\mathbb{Z}}  
\newcommand{\U}{\mathbf{U}}  
\newcommand{\V}{\mathbf{V}}  
\newcommand{\X}{\mathbf{X}}  
\newcommand{\Y}{\mathbf{Y}}  
\newcommand{\s}{\mathbf{s}}  
\newcommand{\uu}{\mathbf{u}}  
\newcommand{\vv}{\mathbf{v}}  
\newcommand{\x}{\mathbf{x}}  
\newcommand{\y}{\mathbf{y}}  
\newcommand{\z}{\mathbf{z}}  
\newcommand{\g}{\mathbf{g}}  
\newcommand{\rr}{\mathbf{r}}  
\newcommand{\tb}{\tilde{b}}
\newcommand{\tc}{\tilde{c}}
\newcommand{\hF}{\widehat{F}}
\newcommand{\fb}{\mathfrak{b}}
\newcommand{\cS}{\mathcal{S}} 
\newcommand{\cL}{\mathcal{L}} 
\newcommand{\cP}{\mathcal{P}} 
\newcommand{\Pto}{\overset{\mathrm{P}}{\longrightarrow}}
\newcommand{\TV}{\mathrm{d_{TV}}}  
\newcommand{\rp}{\mathrm{p}}  
\newcommand{\eps}{\varepsilon}
\newcommand{\vp}{\varphi}
\newcommand{\iid}{i.i.d.}
\newcommand{\ie}{i.e.}
\newcommand\half{\tfrac12}
\newcommand{\ga}{\alpha} 
\newcommand{\gb}{\beta} 
\newcommand{\gl}{\lambda} 
\newcommand\gs{\sigma} 
\newcommand\gD{\Delta} 
\newcommand\gam{\gamma} 
\newcommand{\gz}{\zeta} 
\newcommand{\go}{\omega} 
\newcommand\hvp{\widehat{\vp}} 
\newcommand\hgs{\widehat{\bm{\gs}}} 
\newcommand\bzn{b^*_n}
\newcommand\bun{\underline{b}_n}
\newcommand\gbun{\underline{\gb}_n}
\newcommand\muun{\underline{\mu}_n}
\newcommand\gu{\underline{\g}}
\newcommand\bon{\overline{b}_n}
\newcommand\Non{\overline{N}_n}
\newcommand\muon{\overline{\mu}_n}
\newcommand{\Aon}{\overline{A}_n}
\newcommand{\Pion}{\overline{\Pi}_n}
\newcommand{\Mon}{\overline{M}_n}
\newcommand{\gov}{\overline{\g}}
\newcommand{\han}{\widehat{u}_n} 
\newcommand\glnm{\gl_{n, m}}
\newcommand{\refT}[1]{Theorem~\ref{#1}}
\newcommand{\refL}[1]{Lemma~\ref{#1}}
\newcommand{\refR}[1]{Remark~\ref{#1}}
\newcommand{\refS}[1]{Section~\ref{#1}}
\newcommand{\refP}[1]{Proposition~\ref{#1}}
\newcommand{\refConj}[1]{Conjecture~\ref{#1}}
\newcommand{\refApp}[1]{Appendix~\ref{#1}}
\newcommand\noqed{\renewcommand{\qed}{}} 
\newcommand\dd{\,\mathrm{d}}
\newcommand\ddx{\mathrm{d}}
\DeclareMathOperator{\E}{\mathbb{E}}  
\DeclareMathOperator{\Var}{Var}  
\DeclareMathOperator{\rL}{L}  
\DeclareMathOperator{\Po}{Po}
\DeclareMathOperator{\Gam}{Gamma}
\newcommand{\ignore}[1]{}
\theoremstyle{plain}
\newtheorem{theorem}{Theorem}[section]
\newtheorem{lemma}[theorem]{Lemma}
\newtheorem{proposition}[theorem]{Proposition}
\newtheorem{conj}[theorem]{Conjecture}
\theoremstyle{definition}
\newtheorem{remark}[theorem]{Remark}
\theoremstyle{remark}
\begin{document}

\begin{frontmatter}



\title{A New Fine-scale Berry--Esseen-type Gumbel-limit Theorem for Multivariate Maxima}

\author{James Allen Fill}
\affiliation{organization={Department of Applied Mathematics and Statistics, The Johns Hopkins University},
addressline={3400 N.~Charles Street},
city={Baltimore},
postcode={21218-2682},
state={MD},
country={USA}}


\begin{abstract}
For $d \geq 2$ and \iid\ $d$-dimensional observations $\X^{(1)}, \X^{(2)}, \ldots$ with independent Exponential$(1)$ coordinates, let $\vp_n$ denote the minimum $\ell^1$-norm among the maxima of $\{\X^{(1)}, \ldots, \X^{(n)}\}$.  (A \emph{maximum} from this set is an observation $\X^{(k)}$ with $1 \leq k \leq n$ such that $\X^{(k)} \not\prec \X^{(i)}$ for all $1 \leq i \leq n$, where $\x \prec \y$ means that $x_j < y_j$ for $1 \leq j \leq d$.)  Key roles in the study of multivariate Pareto records are played by $\vp_n$ and by the more easily handled maximum with the maximum $\ell^1$-norm.  Fill et al.\ proved~\cite[Theorem 1.11(a)]{FNS_2024} that
\[
\vp_n = \ln n - \ln \ln \ln n - \ln(d - 1) + O_\rp\!\left( \frac{1}{\ln \ln n} \right),
\]
where $Z_n = O_\rp(a_n)$ means that $Z_n / a_n$ is bounded in probability,
and conjectured~\cite[Remarks~1.13 and~3.3]{FNS_2024} that
\[
(\ln \ln n) \left(\vp_n - [\ln n - \ln \ln \ln n - \ln(d - 1)] \right)
\]
has a nondegenerate limiting distribution, suggesting that the limiting distribution might be that of $ - G$, where~$G$ has a Gumbel distribution with location $ - \frac{\ln[(d - 1)!]}{d - 1}$ and scale $\frac{1}{d - 1}$.  In the present paper we prove a Berry--Esseen-type theorem for this convergence in distribution, thereby establishing a very sharp result for $\vp_n$.
\end{abstract}



\begin{keyword}
Multivariate maxima \sep Gumbel distributions \sep Berry--Esseen-type theorem \sep Poisson approximation \sep 
Chen--Stein method \sep multivariate Pareto records


\MSC[2020] Primary:\ 60D05 \sep Secondary:\ 60F05

\end{keyword}

\end{frontmatter}



\newpage

\section{Introduction, background, assumptions, notation, and statement of main results}
\label{S:intro}

{\bf Some notation:\ }We 
define ${\bf 1}(A)$ to be $0$ or $1$ according as~$A$ is false or true.  We denote by $\cL(Y)$ the distribution, or law, of a random variable~$Y$.  For a positive integer~$n$, let $[n] := \{1, \ldots, n\}$.  We find it very useful to abbreviate the $k$th iterate of natural logarithm $\ln$ by $\rL_k$ and $\rL_1$ by $\rL$.  We fix a dimension $d \geq 2$, and $\x$ is then shorthand for the vector $(x_1, \ldots, x_d)$.  We write ${\bf 0}$ for a vector of $0$'s and $\x \prec \y$ (or $\y \succ \x$) to mean that 
$x_j < y_j$ for every $j \in [d]$.
 For simplicity of notation, we will write $\| \x \|$ (rather than $\| \x \|_{\ell^1}$ or $\| \x \|_1$) for the $\ell^1$-norm 
$\sum_{j = 1}^d |x_j|$ ($\ = \sum_{j = 1}^d x_j$ if $\x \succ {\bf 0}$) of~$\x$.  We take the \emph{probabiity simplex} 
$\cS_{d - 1}$ to be the set of all $\x \succ {\bf 0}$ with $\| \x \| = 1$.  All the results of this paper hold for any fixed dimension 
$d \geq 2$, and all asymptotic assertions are as $n \to \infty$ unless otherwise specified.
\smallskip

Univariate records have been quite thoroughly studied (see \cite{Arnold_1998} for a book-length treatment), but there is still fundamental knowledge to uncover about multivariate (Pareto) records, and specifically about (partial) \emph{maxima} [see the next paragraph for a definition of maximum; in the standard terminology of \cite{Fillboundary_2020}, a maximum at epoch~$n$ is a \emph{current} or \emph{remaining} (Pareto) \emph{record}], even under the simple category of models (call the category by the name~S) that observations are \iid\ (independent and identically distributed) and coordinates of each observation are independent and continuously distributed (with possibly different distributions for the different coordinates).

We assume throughout this paper that $\X^{(1)}, \X^{(2)}, \ldots$ is a sequence of \iid\ copies of a random vector 
$\X = (X_1, \ldots, X_d)$ taking values in $(0, \infty)^d$ and having independent Exponential$(1)$ coordinates.  Call this Model~E, a member of Category~S.  We are interested in the sequence, indexed by~$n$, of the set of maxima of $\{\X^{(1)}, \ldots, \X^{(n)}\}$ at epoch~$n$; a \emph{maximum} from this set is an observation $\X^{(k)}$ with 
$1 \leq k \leq n$ such that $\X^{(k)} \not\prec \X^{(i)}$ for all $1 \leq i \leq n$.
Results about maxima under Model~E are readily transformed to results under any specific model falling in Category~S. 

It is natural and important to study the $\ell^1$-norms of maxima under Model~E. 
One motivation for this study is that \cite[Theorem~1.4]{Fillboundary_2020} establishes the basic result that the conditional distribution of $\| \X^{(k)} \|$ given that $\X^{(k)}$ is a current record at epoch~$n$ doesn’t depend on 
$k \in [n]$ (so let us suppose now that $k = n$) and (after centering by $\ln n$) this ``generic maximum'' has a weak limit that is standard Gumbel (\ie,\ has distribution function $x \mapsto e^{- e^{-x}}$, $x \in \R$); moreover, as seen from \cite[Remark 2.1(b)]{Fillmulti_2023}, the proof of \cite[Theorem~1.4]{Fillboundary_2020} shows that 
$\|\X^{(n)}\|$ and $\U_n := \|\X^{(n)}\|^{-1} \X^{(n)}$ are independent, with $\U_n$ uniformly distributed on the probability simplex $\cS_{d - 1}$.  Another motivation is that the observation by Bai et al.~\cite[last paragraph of Section~1 and Section~3.2]{Bai_2005} ``that nearly all maxima occur in a thin strip sandwiched between [the] two parallel hyper-planes'' 
\begin{equation}
\label{Bai hyper}
\| \x \| = \rL n - \rL_3 n - \rL[4 (d - 1)] \quad \mbox{and} \quad \| \x \| = \rL n + 4 (d - 1) \rL_2 n.
\end{equation}
is one key to their proof of asymptotic normality for the number of maxima at epoch~$n$, a result that had been indicated by Baryshnikov~\cite{Baryshnikov_2000}.

Fill and Naiman~\cite{Fillboundary_2020} initiated, and Fill et al.\ \cite{FNS_2024} refined, the study of sharp localization of (separately) the maximum and minimum $\ell^1$-norms of maxima (and related quantities) at epoch~$n$ under Model~E.  See \cite[Theorem~1.8]{Fillboundary_2020} (or \cite[Theorem~1.6]{FNS_2024}), due essentially to the classical extreme value theory of Kiefer~\cite{Kiefer_1972}, for treatment of the maximum $\ell^1$-norm, denoted there as $F^+_n$.  In particular, the theorem establishes that
\begin{equation}
\label{F+n}
F^+_n - \big[ \rL n + (d - 1) \rL_2 n - \rL((d - 1)!) \big]
\end{equation}
converges in distribution to standard Gumbel.

This paper concerns the minimum $\ell^1$-norm, denoted $\hF^-_n$ in \cite{FNS_2024} and more simply $\vp_n$ here, of any maximum at epoch~$n$.  Fill et al.\ \cite{FNS_2024} applied the first- and second-moment methods to the count (for arbitrarily chosen $a \in \R$) of the number of maxima with $\ell^1$-norm smaller than 
\begin{equation}
\label{b0n}
\bzn \equiv \bzn(a) := \rL n - \rL_3 n - \rL(d - 1) + \frac{a}{\rL_2 n}
\end{equation} 
to prove~\cite[Theorem 1.11(a)]{FNS_2024} that
\[
\vp_n = \rL n - \rL_3 n - \rL(d - 1) + O_\rp\!\left( \frac{1}{\rL_2 n} \right),
\]
where $Z_n = O_\rp(a_n)$ means that $Z_n / a_n$ is bounded in probability,
and conjectured~\cite[Remarks~1.13 and~3.3]{FNS_2024} that
\begin{equation}
\label{phin normalized}
\vp_n^{\circ} := (\rL_2 n) \left(\vp_n - [\rL n - \rL_3 n - \rL(d - 1)] \right)
\end{equation}
has a nondegenerate limiting distribution, suggesting that the limiting distribution might be that of $ - G$, where~$G$ has a Gumbel distribution with location $ - \frac{\ln[(d - 1)!]}{d - 1}$ and scale $\frac{1}{d - 1}$.  In the present paper we prove this convergence in distribution, with a Berry--Esseen-type bound, thereby describing the asymptotic behavior of $\vp_n$ on a quite fine scale---in particular, finer than that for the much more easily studied $F^+_n$.

Here then is the main result of this paper.  Recall that if $G_0$ is distributed standard Gumbel (with distribution function $x \mapsto e^{- e^{-x}}$, $x \in \R$) and $m \in \R$ and $s \in (0, \infty)$, then $m + s\,G_0$ has a Gumbel distribution with location~$m$ and scale~$s$.  Recall also the definition of \emph{Kolmogorov distance} $d_{\mathrm{K}}$ between two probability measures $\mu_1$ and $\mu_2$ on~$(\R, \mbox{Borels})$ with respective distribution functions $F_1$ and $F_2$:
\[
d_{\mathrm{K}}(\mu_1, \mu_2) := \sup_{t \in \R} |F_1(t) - F_2(t)|.
\]

\begin{theorem}[main theorem:\ Gumbel limit]
\label{T:main}
Fix $d \geq 2$.  Let $\X^{(1)}, \X^{(2)}, \ldots$, $\X^{(n)}$ be independent random $d$-dimensional vectors each having independent {\rm Exponential}$(1)$ coordinates, let $\vp_n$ denote the minimum $\ell^1$-norm of any maximum of these vectors, and define $\vp_n^{\circ}$ as at~\eqref{phin normalized}.  Then $\vp_n^{\circ}$ converges in distribution to $- G$, where~$G$ has a Gumbel distribution with location $ - \frac{\rL[(d - 1)!]}{d - 1}$ and scale $\frac{1}{d - 1}$; moreover,
\begin{equation}
\label{BE}
d_{\mathrm{K}}(\cL(\vp_n^{\circ}), \cL(- G)) = O((\rL_2 n)^{-1 / 2} \rL_3 n).
\end{equation} 
\end{theorem}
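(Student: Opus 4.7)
For $a \in \R$, let $M_n(a) := \#\{i \in [n]: \X^{(i)} \text{ is a maximum and } \|\X^{(i)}\| < \bzn(a)\}$. Then $\{\vp_n^{\circ} > a\} = \{M_n(a) = 0\}$, and since $P(-G > a) = \exp(-\lambda(a))$ with $\lambda(a) := e^{(d-1)a}/(d-1)!$, the theorem reduces to the uniform estimate $\sup_a |P(M_n(a)=0) - e^{-\lambda(a)}| = O((\rL_2 n)^{-1/2}\rL_3 n)$. The natural route is Poisson approximation: $M_n(a) \approx \Po(\lambda(a))$ in total variation, from which $P(M_n(a)=0) \approx \exp(-E M_n(a))$.

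\textbf{Moment calculations and Chen--Stein.} Write $M_n(a) = \sum_i I_i$ with $I_i$ the indicator that $\X^{(i)}$ is a maximum with $\|\X^{(i)}\| < \bzn(a)$. By exchangeability,
$$\lambda_n(a) := E\,M_n(a) = \frac{n}{(d-1)!}\int_0^{\bzn(a)} r^{d-1}e^{-r}(1-e^{-r})^{n-1}\,dr;$$
the change of variable $r = \bzn(t)$ combined with Taylor expansion of $e^{-t/\rL_2 n}$ yields $\lambda_n(a) = \lambda(a) + $ quantitative error. For the total-variation bound I would invoke the Chen--Stein inequality for exchangeable indicator sums, reducing matters to the control of $n^2 E(I_1 I_2) - (E M_n)^2$. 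The key integral
$$E(I_1 I_2) = \int\!\!\!\int e^{-\|\x\|-\|\y\|}\bigl(1 - e^{-\|\x\|} - e^{-\|\y\|} + e^{-\|\x\vee\y\|}\bigr)^{n-2}\,d\x\,d\y,$$
integrated over $\x, \y \succ {\bf 0}$ with norms less than $\bzn(a)$ and satisfying mutual non-domination, arises via the identity $1 - e^{-\|\x\|} - e^{-\|\y\|} + e^{-\|\x\vee\y\|} = P(\X \not\succ \x,\,\X \not\succ \y)$. The expression factors as $p_n^2 \cdot (1+\text{correction})$ once the $e^{-\|\x\vee\y\|}$ contribution is treated via a geometric dichotomy: when $\|\x \vee \y\|$ is substantially larger than $\bzn(a)$, its effect is exponentially negligible; when it is comparable, mutual non-domination plus comparable norms force $\x$ and $\y$ to lie in a set of small volume.

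\textbf{Finishing and main obstacle.} Combining the above yields a pointwise bound on $|P(M_n(a)=0) - e^{-\lambda(a)}|$. To upgrade to the supremum over $a$, use monotonicity of $a \mapsto P(M_n(a)=0)$ together with tail bounds: for $a$ outside a slowly growing window, $P(M_n(a) = 0)$ is controlled by $1/\lambda_n(a)$ via the second-moment method (for $a$ very negative) or is close to $1$ via Markov (for $a$ very positive), and $e^{-\lambda(a)}$ matches. The \emph{main obstacle} is achieving the sharp rate $O((\rL_2 n)^{-1/2}\rL_3 n)$ rather than the cruder $O(\rL_3 n / \rL_2 n)$ that a na\"\i ve first-moment expansion produces; the $(\rL_2 n)^{1/2}$ in the denominator suggests that a cancellation between the dominant errors in $\lambda_n(a) - \lambda(a)$ and in the Chen--Stein two-point overlap must be tracked at the intermediate $a$-scale $(\rL_2 n)^{-1/2}$.
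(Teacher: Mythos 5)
Your overall blueprint—reduce to a Poisson approximation for the count of small maxima, handle the tails by first/second‐moment bounds, and do a Chen--Stein analysis of the two‐point correlation on an intermediate $a$-window—matches the paper's architecture in broad outline. The tail split, the identity $\{\vp_n^\circ > a\}=\{\text{count}=0\}$, the role of $E(I_1 I_2)$, and the geometric dichotomy for $\|\x\vee\y\|$ are all genuinely the right ideas. However, there are two concrete gaps.

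\textbf{The Chen--Stein step is not justified as stated.} You propose to apply ``the Chen--Stein inequality for exchangeable indicator sums'' directly to $M_n(a)=\sum_{i\le n} I_i$, where $I_i$ is the event that $\X^{(i)}$ is a small maximum. But the indicators $I_i$ have no useful \emph{local} dependence structure in the index $i$: each $I_i$ depends on all $n$ observations, so in the Arratia--Goldstein--Gordon framework the quantity $b_3$ (measuring departure from local independence) is not zero and is not easy to control, and the needed reduction to $n^2 E(I_1I_2)-(EM_n)^2$ does not follow from $b_1+b_2$ alone. The paper circumvents this by changing the index set entirely: it first truncates below a low boundary $\bun$ so the relevant events depend only on observations in a thin shell, then \emph{Poissonizes} (Section~\ref{S:2}) so that counts in disjoint spatial regions become independent, then conditions away the far upper tail $\|\x\|>\bon$ (Section~\ref{S:3}), and finally \emph{discretizes space} into small cubes $G_{n,m,\alpha}$ (Section~\ref{S:4}) so that the resulting indicators $Z_{n,m,\alpha}$ are genuinely locally dependent, with $\fb_3=0$ and $\fb_1,\fb_2$ computable. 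This discretize-in-space step (borrowed from Bai et al.) is not optional and is missing from your proposal; without it, your appeal to Chen--Stein is an unproved claim.

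\textbf{The rate mechanism is misdiagnosed.} You speculate that the rate $O((\rL_2 n)^{-1/2}\rL_3 n)$ arises from a cancellation between errors in $\lambda_n(a)-\lambda(a)$ and in the two‐point overlap. In fact, the paper's Poisson approximation on $|a|\le a_n$ gives a \emph{much} smaller error, $O((\rL_2 n)^{-(d-\frac32)}(\rL_3 n)^{-1})$; the bottleneck is the choice of the window half-width $a_n=\frac{1}{2(d-1)}(\rL_3 n-2\rL_4 n)$, for which the tail bounds in Proposition~\ref{P:BE again 2} and the boundary-change estimate in Proposition~\ref{P:bzn bn} are each $\Theta((\rL_2 n)^{-1/2}\rL_3 n)$ via $e^{-(d-1)a_n}=(\rL_2 n)^{-1/2}\rL_3 n$. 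No fine cancellation at an intermediate scale is involved; the rate is purely a balancing of the tail window against the moment-method estimates. Relatedly, your proposal omits the replacement of $\bzn(a)$ by the slightly larger $b_n(a)$ (equation~\eqref{bn}), which is what makes the Poissonized calculations tractable and is itself responsible for an error term of the claimed order.
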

\noindent
In other words, \eqref{BE} asserts precisely that
\begin{equation}
\label{BE again 1}
\left| \P(\vp_n^{\circ} > a) - \exp\left[ - \frac{e^{(d - 1) a}}{(d - 1)!} \right] \right| 
= O((\rL_2 n)^{-1 / 2} \rL_3 n)\mbox{\ uniformly in $a \in \R$},
\end{equation}
or equivalently that
\begin{equation}
\label{BE again 2}
\left| \P(\vp_n > \bzn(a)) - \exp\left[ - \frac{e^{(d - 1) a}}{(d - 1)!} \right] \right|
= O((\rL_2 n)^{-1 / 2} \rL_3 n)
\end{equation}
uniformly in $a \in \R$.
In this paper we have tried for a good rate in \refT{T:main}, but we do not know the optimal rate.  The plan for proving~\eqref{BE again 2} will be first to prove~\eqref{BE again 2} uniformly for $|a| > a_n$, where
\begin{equation}
\label{an}
a_n := \frac{1}{2 (d - 1)} (\rL_3 n - 2 \rL_4 n);
\end{equation} 
then to prove
\begin{align}
\lefteqn{\hspace{-.7in}\left| \P(\vp_n > \bzn(a)) - \P(\vp_n > b_n(a)) \right|} \nonumber \\
&= O((\rL_2 n)^{-1 / 2} \rL_3 n)\mbox{\ uniformly for $|a| \leq a_n$}, \label{bzn bn}
\end{align}
where $b_n(a)$ is the following (fairly slight) modification of $\bzn(a)$, employed for computational convenience:
\begin{equation}
\label{bn}
b_n \equiv b_n(a) := \rL n - \rL_3 n - \rL(d - 1) - \rL\!\left( 1 - \frac{a}{\rL_2 n} \right);
\end{equation}
and finally to prove
\begin{align}
\lefteqn{\hspace{-.1in}\left| \P(\vp_n > b_n(a)) - \exp\left[ - \frac{e^{(d - 1) a}}{(d - 1)!} \right] \right|} \nonumber \\
&= O\!\left( (\rL_2 n)^{ - \left( d - \tfrac32 \right)} (\rL_3 n)^{-1} \right) 
= o((\rL_2 n)^{-1 / 2} \rL_3 n)\mbox{\ uniformly for $|a| \leq a_n$}. \label{BE b}
\end{align}

As in \cite[(2.1)]{FNS_2024}, for $b > 0$ define
\begin{equation}
\label{rhonb}
\rho_n(b) \coloneq \#\{ \mbox{maxima~$\rr$ at epoch~$n$ with $\| \rr \| \leq b$} \}.
\end{equation}
Recall that the total variation distance between (the distributions of) two integer-valued random variables $W_1$ and $W_2$ can be defined as the (achieved) supremum
\[
\TV(W_1, W_2) \equiv \TV(\cL(W_1), \cL(W_2)) := \sup_{A \subseteq \Z} |\P(W_1 \in A) - \P(W_2 \in A)| 
\]
(it is sometimes defined as twice this quantity) and that
\begin{align}
\TV(W_1, W_2) 
&= \half \sup_f |\E f(W_1) - \E f(W_2)| = \inf \P(W_1 \neq W_2) \label{TV unequal} \\
&= \half \sum_k | \P(W_1 = k) - \P(W_2 = k) | \label{TV ell1},
\end{align}
where in~\eqref{TV unequal} the (achieved) supremum is taken over all functions $f:\Z \to \R$ with sup-norm bounded by~$1$ and the (achieved) infimum is taken over all couplings of $\cL(W_1)$ and $\cL(W_2)$.
Recall also that (restricting to integer-valued random variables $W, W_1, W_2, \ldots$) convergence of $W_n$ to~$W$ in distribution is equivalent to $\TV(W_n, W) \to 0$. 

We write $\Po(\gl)$ as shorthand for the Poisson distribution with mean~$\gl$ (or, sometimes abusing notation, for a random variable with that distribution).
By a standard ``switching relation'' recasting we have
\begin{equation}
\label{prob}
\P(\vp_n > b_n(a)) = \P(\rho_n(b_n(a)) = 0);
\end{equation}
further, with
\begin{equation}
\label{gl}
\gl \equiv \gl(a) := \frac{e^{(d - 1) a}}{(d - 1)!},
\end{equation}
the subtracted term in~\eqref{BE b} is $e^{- \gl}$, the probability that $\Po(\gl) = 0$.  Thus our key result~\eqref{BE b} will follow immediately from the following Poisson approximation theorem, a theorem we believe is of interest in its own right. 

\begin{theorem}[Poisson approximation]
\label{T:main Poisson}
Fix $d \geq 2$ and recall the notation~\eqref{rhonb}, \eqref{bn}, and~\eqref{gl}.
Then as $n \to \infty$ we have
\begin{equation}
\label{main TV0}
\TV\!\left( \rho_n(b_n), \Po(\gl) \right) 
= O\!\left( (\rL_2 n)^{ - \left( d - \tfrac32 \right)} (\rL_3 n)^{-1} \right)\mbox{\rm \ uniformly for $|a| \leq a_n$}.
\end{equation}
\end{theorem}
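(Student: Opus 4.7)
My plan is to apply the Chen--Stein method of Poisson approximation by writing
\[
\rho_n(b_n) = \sum_{k = 1}^n I_k, \qquad I_k := \mathbf{1}\!\left( \X^{(k)} \mbox{ is a maximum at epoch } n \mbox{ with } \| \X^{(k)} \| \leq b_n \right),
\]
and exploiting that the $I_k$'s are exchangeable, so that the first two factorial moments take the simple form $\mu_1 := n \E I_1 = \E \rho_n(b_n)$ and $\mu_2 := n(n - 1) \E[I_1 I_2] = \E[\rho_n(b_n)(\rho_n(b_n) - 1)]$.  Since the indicators are positively related (conditioning $\X^{(k)}$ to be a small-norm maximum forces each other $\X^{(j)}$ to have at least one small coordinate, hence more likely to be a small-norm maximum itself), a Chen--Stein / size-biased coupling bound controls $\TV(\rho_n(b_n), \Po(\mu_1))$ essentially through the variance defect $\Var \rho_n(b_n) - \mu_1 = \mu_2 - \mu_1^2$, while the elementary inequality $\TV(\Po(\mu_1), \Po(\gl)) \leq |\mu_1 - \gl|$ brings the bound to the target $\Po(\gl)$.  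The whole task therefore reduces to sharp asymptotics $\mu_1 = \gl + O(\cdots)$ and $\mu_2 = \mu_1^2 + O(\cdots)$, uniformly for $|a| \leq a_n$, with error of the order claimed.

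Conditioning on $\X^{(1)} = \x$ gives the conditional probability $(1 - e^{-\| \x \|})^{n - 1}$ that $\X^{(1)}$ is a maximum at epoch $n$; passing to polar coordinates on $(0, \infty)^d$, where the $(d - 1)$-dimensional measure of the simplex slice $\{ \x : \| \x \| = s \} \cap (0, \infty)^d$ equals $s^{d - 1}/(d - 1)!$, yields the one-dimensional integral
\[
\mu_1 = \int_0^{b_n} \frac{n s^{d - 1} e^{-s} (1 - e^{-s})^{n - 1}}{(d - 1)!}\,ds.
\]
The integrand is sharply concentrated at the upper endpoint $s = b_n$, where $n e^{-s} \sim (d - 1) \rL_2 n$.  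Substituting $s = b_n - u / \rL_2 n$, replacing $(1 - e^{-s})^{n - 1}$ by $\exp(-n e^{-s})$ with controlled error, and Taylor-expanding $s^{d - 1}$ about $b_n$ reveals a leading term exactly equal to $\gl$; careful bookkeeping of the Taylor remainders then gives the desired bound on $|\mu_1 - \gl|$.

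The main technical step is the second moment.  Conditioning on $\X^{(1)} = \x_1$ and $\X^{(2)} = \x_2$ with $\x_1 \not\prec \x_2$ and $\x_2 \not\prec \x_1$, the conditional probability that both are maxima at epoch $n$ equals $(1 - e^{-\| \x_1 \|} - e^{-\| \x_2 \|} + e^{-\| \x_1 \vee \x_2 \|})^{n - 2}$, where $\x_1 \vee \x_2$ denotes the coordinate-wise maximum.  Replacing the base by its exponential counterpart $\exp[-n(e^{-\|\x_1\|} + e^{-\|\x_2\|} - e^{-\|\x_1 \vee \x_2\|})]$ and factoring out $\exp[-n(e^{-\|\x_1\|} + e^{-\|\x_2\|})]$ shows that $\mu_2 - \mu_1^2$ decomposes into an \emph{overlap contribution} from the factor $e^{n e^{-\| \x_1 \vee \x_2 \|}} - 1$ and a small \emph{incomparability defect} from the excluded ordered pairs $\x_1 \prec \x_2$ and $\x_2 \prec \x_1$.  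I would split the two-fold integration according to the angular proximity of the simplex projections $\x_1 / \| \x_1 \|, \x_2 / \| \x_2 \| \in \cS_{d - 1}$: for well-separated directions the overlap factor is negligible since $n e^{-\| \x_1 \vee \x_2 \|}$ is tiny there; for angularly close directions, work in local coordinates on $\cS_{d - 1}$ so that the $(d - 1)$-dimensional volume of the ``close'' region appears explicitly, balanced against the rate at which $n e^{-\| \x_1 \vee \x_2 \|}$ grows as the projections coincide.  Extracting the sharp rate from this angular analysis is the main obstacle: the exponent $d - \tfrac32$ in the claimed bound reflects the interplay between this $(d - 1)$-dimensional simplex geometry and the radial concentration window of scale $1/\rL_2 n$, and uniformity across the extended range $|a| \leq a_n$ requires careful bookkeeping of all error terms throughout.
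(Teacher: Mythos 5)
Your plan to apply a Chen--Stein size-biased coupling directly to $\rho_n(b_n) = \sum_{k=1}^n I_k$ via positive relation has a genuine gap: the claimed positive association of the indicators $I_k$ is not established and is almost certainly false.  The intuition you give (that conditioning $\X^{(k)}$ to be a small-norm maximum forces the others to have at least one small coordinate, making them more likely to be small-norm maxima) ignores a strong \emph{negative} dependence that is equally present: if $\X^{(j)} \prec \X^{(k)}$ with both in the thin shell $\{\bun < \|\cdot\| \leq b_n\}$, then $\X^{(j)}$ is dominated and $I_j = 0$, so comparable pairs contribute a subtracted term.  The paper makes this explicit in the proof of \refL{L:VarNon}, where $\Var \Non - \E \Non = -I_{n,0} + I_n$ with \emph{both} $I_{n,0} \geq 0$ (comparable pairs) and $I_n \geq 0$ (incomparable pairs).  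With contributions of both signs, the indicators are neither positively nor negatively related, the variance defect does not have a determinate sign, and the clean Chen--Stein bound $\TV(W, \Po(\mu_1)) \lesssim |\Var W / \mu_1 - 1|$ available under monotone couplings is not applicable.

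There is a second, more structural gap.  The Arratia--Goldstein--Gordon bound on $\TV(W, \Po(\lambda))$ in terms of $\fb_1 + \fb_2 + \fb_3$ reduces to a second-moment (variance-defect) calculation only when the long-range dependence term $\fb_3$ vanishes, which requires \emph{exact} conditional independence of each $I_k$ from everything outside a neighborhood.  For your un-Poissonized sum over $n$ \iid\ observations, $\fb_3 \neq 0$: the number of observations is fixed at~$n$, so the indicators carry global dependence through the total count, and conditioning on $\X^{(k)}$ being undominated constrains the entire sample, not merely points near $\X^{(k)}$.  The paper's chain \eqref{TV1}--\eqref{TV7} — discarding the small-norm tail $\rho_n(\bun)$, Poissonizing, truncating above $\bon$, and discretizing into cubes — is designed precisely so that $\fb_3 = 0$ in the discretized Poisson model; only then does the identity $\fb_1 + \fb_2 = \E Z_{n,m}^2 - \E \Po(\glnm)^2 + 2\fb_1$ reduce the problem to a variance computation plus an estimate of $\fb_1$.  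Your second-moment outline (angular decomposition on $\cS_{d-1}$ against the radial window of scale $1/\rL_2 n$) is heuristically on the right track for $I_n$ in \refL{L:VarNon}, but by itself it cannot produce a $\TV$ bound without the local-independence scaffolding that makes the Chen--Stein machinery apply.
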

\smallskip

\begin{remark}
Using \refT{T:main Poisson}, the rate of convergence in \refT{T:main} improves to match that in \refT{T:main Poisson} if 
$\cL(\vp_n^{\circ})$ is replaced by the distribution of $(\rL_2 n) [1 - \exp(- \vp_n^{\circ} / \rL_2 n)]$.
\end{remark}
\smallskip

For fixed $a \in \R$, it follows from \cite[Lemma~2.1]{FNS_2024} that
\begin{equation}
\label{mean}
\E \rho_n(\bzn) \to \gl
\end{equation}
and from \cite[Lemma~3.1 and Remark~B.1]{FNS_2024} that 
\begin{equation}
\label{variance}
\Var \rho_n(\bzn) \to \gl
\end{equation}
as well.  The theme of~\cite{Arratia_1989}, as stated by the authors, is that ``Convergence to the Poisson distribution, for the number of occurrences of dependent events, can often be established by computing only first and second moments, but not higher ones'', and (notwithstanding the slight discrepancy between $\bzn$ and $b_n$) that theme, together with~\eqref{mean}--\eqref{variance}, is the motivation for the proof of \refT{T:main Poisson}.

Before carrying out the technical details of~\cite{Arratia_1989} to provide rigorous proofs of 
Theorems \ref{T:main}--\ref{T:main Poisson}, which will require a rather substantial amount of modification of $\rho_n(b_n)$ and many additional calculations, let us give a (very) rough sketch as to why $\rho_n(\bzn)$ should be approximately distributed as $\Po(\gl)$, when (as we assume here) $n$ is large.  For $\bun$ defined at 
\eqref{bun}--\eqref{omegan}, $\rho_n(\bun) = 0$ with high probability.  Moreover, for $\x$ satisfying $\bun < \| \x \| \leq \bzn$ and $G_n(\x)$ a small neighborhood of $\x$, the events
\begin{align}
\lefteqn{E_n(\x)} \nonumber \\
&:= \{ \mbox{there is at least one maximum of $\{ \X^{(1)}, \ldots, \X^{(n)} \}$ in $G_n(\x)$} \} \label{Enx}
\end{align}
are ``mostly'' independent as~$\x$ varies (in the sense that there is only suitably local dependence), with 
probabilities
\begin{align}
\lefteqn{\P(E_n(\x))} \nonumber \\ 
&\approx \P(\mbox{there is exactly one maximum of $\{ \X^{(1)}, \ldots, \X^{(n)} \}$ in $G_n(\x)$}) \nonumber \\
&\approx n \P(\X^{(n)} \in G_n(\x),\mbox{\ $\X^{(n)}$ is a maximum of $\{ \X^{(1)}, \ldots, \X^{(n)} \}$}) \nonumber \\
&\approx \int_{\y \in G_n(\x)} n e^{- \| \y \|} (1 - e^{- \| \y \|})^{n - 1} \dd \y \nonumber \\ 
&\approx \int_{\y \in G_n(\x)} (n - 1) e^{- \| \y \|} \exp\!\left[ - (n - 1) e^{- \| \y \|}  \right] \dd \y; \label{PEnx}
\end{align}
thus, $\rho_n(\bzn)$ is approximately Poisson distributed with parameter
\begin{align}
\lefteqn{\hspace{-.5in}\int_{\x:\,\bun < \| \x \| \leq \bzn} (n - 1) e^{- \| \x \|} \exp\!\left[ - (n - 1) e^{- \| \x \|}  \right] \dd \x} 
\nonumber \\
&= \int_{(n - 1) e^{- \bzn}}^{(n - 1) e^{- \bun}} \frac{[\rL(n - 1) - \rL z]^{d - 1}}{(d - 1)!} e^{-z} \dd z \nonumber \\
&\approx \frac{(\rL n)^{d - 1}}{(d - 1)!} \exp\!\left[ - (n - 1) e^{- \bzn} \right]
\approx \frac{e^{(d - 1) a}}{(d - 1)!} = \gl. \label{parameter}
\end{align}

Our rigorous proofs of Theorems~\ref{T:main}--\ref{T:main Poisson} proceed as follows.  In \refS{S:BE1} we 
prove~\eqref{BE again 2} uniformly for $|a| > a_n$.  In \refS{S:BE2} we prove~\eqref{bzn bn}.  The remaining sections, with the exception of the speculative final section, are devoted to the proof of \refT{T:main Poisson}, which will complete the proof of \refT{T:main}.

In \refS{S:1} we introduce a boundary $(\bun)$ that is appreciably smaller than $(b_n)$ and prove that
\begin{equation}
\label{TV1}
\TV\!\left( \rho_n(b_n), \rho_n(b_n) - \rho_n(\bun) \right) = o((\rL n)^{- (d - 1)})\mbox{\ uniformly for $a \in \R$}.
\end{equation}
In \refS{S:2} we introduce a maxima-count $N_n$ that is a Poissonized version of 
$\rho_n(b_n) - \rho_n(\bun)$ and prove that
\begin{equation}
\label{TV2}
\TV\!\left( \rho_n(b_n) - \rho_n(\bun), N_n \right) = o((\rL n)^{- (d - 1)})\mbox{\ uniformly for $|a| \leq a_n$}.
\end{equation}
In \refS{S:3} we introduce a quite large boundary $\big( \bon \big)$ and a random variable $\Non$ that modifies $N_n$ by conditioning on there being no points of the Poisson process with $\ell^1$-norm larger than $\bon$ and prove that
\begin{equation}
\label{TV3}
\TV\!\left( N_n, \Non \right) = O((\rL n)^{-(d - 1)})\mbox{\ uniformly for $|a| \leq a_n$}.
\end{equation}
In \refS{S:4}, by discretization we obtain approximations $Z_{n, m}$ to $\Non$, where each $Z_{n, m}$ (one random variable for each $m \geq 0$)  is a sum of indicator random variables $Z_{n, m, \ga}$, and prove (for each fixed~$n$ and fixed $a \in \R$) that
\begin{equation}
\label{TV4}
\TV\!\left( \Non, Z_{n, m} \right) \to 0\mbox{\ as $m \to \infty$}.
\end{equation}
This discretization and the proof that the dependence of the random variables $Z_{n, m, \ga}$ is suitably local are similar to what was done in \cite{Bai_2005} to prove asymptotic normality of the total number $\rho_n(\infty)$ of maxima, unrestricted by location.   

The heart of the proof of \refT{T:main Poisson} is to employ the technique of \cite{Arratia_1989} and prove that
\begin{align}
\lefteqn{\hspace{-.5in}\limsup_{m \to \infty} \TV\!\left( Z_{n, m}, \Po(\glnm) \right)} \nonumber \\
&= O\!\left( (\rL_2 n)^{ - \left( d - \tfrac32 \right)} (\rL_3 n)^{-1} \right)\mbox{\ uniformly for $|a| \leq a_n$} \label{TV5}
\end{align}
as $n \to \infty$, where $\glnm := \E Z_{n, m}$.  Our proof of~\eqref{TV5} will use results from other sections, including Sections~\ref{S:6}--\ref{S:7}, so we defer it to \refS{S:5}.

In \refS{S:6} we prove (for each fixed~$n$ and fixed $a \in \R$) that
\begin{equation}
\label{TV6}
\TV\!\left( \Po(\glnm), \Po(\E \Non) \right) \to 0\mbox{\ as $m \to \infty$}.
\end{equation}
In \refS{S:7} we prove that
\begin{equation}
\label{TV7}
\TV\!\left( \Po(\E \Non), \Po(\gl) \right) = O((\rL n)^{-1} (\rL_2 n)^{1/2})\mbox{\ uniformly for $|a| \leq a_n$}.
\end{equation}

\refT{T:main Poisson} then follows by first applying the triangle inequality to $\TV\!\left( \rho_n(b_n), \Po(\gl) \right)$ using \eqref{TV1}--\eqref{TV7} (for fixed~$n$ and fixed $|a| \leq a_n$), then taking limit superior as $m \to \infty$ on both sides of the inequality, and finally taking the supremum of both sides over $|a| \leq a_n$ on both sides of the inequality.

Our proofs use several results that are either taken from \cite{FNS_2024} or closely related to results from \cite{FNS_2024}.  For the reader's convenience, such results are collected in \refApp{A:FNS}.

In \refS{S:location} we consider the distribution of the 
maximum at epoch~$n$ having smallest 
$\ell^1$-norm, not just (as in all other sections) its $\ell^1$-norm.

\section{Distribution tails}
\label{S:BE1}

\begin{proposition}
\label{P:BE again 2}
With $a_n$ defined at~\eqref{an}, we have
\begin{equation}
\label{BE again 2OLD}
\left| \P(\vp_n > \bzn(a)) - e^{- \gl(a)} \right|
= O((\rL_2 n)^{-1 / 2} \rL_3 n)
\end{equation}
uniformly for $|a| > a_n$.
\end{proposition}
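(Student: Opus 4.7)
The plan is to exploit the fact that when $|a| > a_n$, both $\P(\vp_n > \bzn(a))$ and $e^{-\gl(a)}$ lie close to the same endpoint (both near $0$ when $a > a_n$, both near $1$ when $a < -a_n$), so the difference in the claim can be controlled by bounding each of the two terms separately by $O((\rL_2 n)^{-1/2} \rL_3 n)$ and invoking the triangle inequality. A direct calculation gives
\[
e^{(d-1) a_n} = e^{\frac12 \rL_3 n - \rL_4 n} = \frac{(\rL_2 n)^{1/2}}{\rL_3 n},
\]
so $\gl(a_n) = (\rL_2 n)^{1/2} / [(d-1)! \,\rL_3 n]$ and $\gl(-a_n) = (d-1)!^{-1} (\rL_2 n)^{-1/2} \rL_3 n$; these are exactly calibrated so that the target rate $(\rL_2 n)^{-1/2} \rL_3 n$ appears on the nose at the boundary of the window.

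For the upper-tail range $a > a_n$, I would first bound $e^{-\gl(a)} \leq 1/\gl(a) \leq 1/\gl(a_n) = O((\rL_2 n)^{-1/2}\rL_3 n)$ using $e^{-x} \leq 1/x$ and monotonicity of $\gl(\cdot)$. To bound $\P(\vp_n > \bzn(a)) = \P(\rho_n(\bzn(a)) = 0)$, I would use the second-moment method via Chebyshev,
\[
\P(\rho_n(\bzn(a)) = 0) \leq \P\!\left(|\rho_n(\bzn(a)) - \E \rho_n(\bzn(a))| \geq \E \rho_n(\bzn(a))\right) \leq \frac{\Var \rho_n(\bzn(a))}{(\E \rho_n(\bzn(a)))^2},
\]
and then use quantitative versions of the moment estimates \eqref{mean}, \eqref{variance} (drawn from \cite[Lemma~2.1, Lemma~3.1, Remark~B.1]{FNS_2024} as collected in \refApp{A:FNS}) to conclude that this ratio is of order $1/\gl(a) \leq 1/\gl(a_n) = O((\rL_2 n)^{-1/2}\rL_3 n)$ uniformly in the range.

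For the lower-tail range $a < -a_n$, the bound $|1 - e^{-x}| \leq x$ gives $1 - e^{-\gl(a)} \leq \gl(a) \leq \gl(-a_n) = O((\rL_2 n)^{-1/2}\rL_3 n)$. For the probability, Markov's inequality gives
\[
1 - \P(\vp_n > \bzn(a)) = \P(\rho_n(\bzn(a)) \geq 1) \leq \E \rho_n(\bzn(a)) \leq \E \rho_n(\bzn(-a_n)),
\]
where the last step uses that $b \mapsto \E \rho_n(b)$ is nondecreasing; the right-hand side is $O(\gl(-a_n)) = O((\rL_2 n)^{-1/2}\rL_3 n)$ by \eqref{mean} together with the quantitative error estimates in \cite{FNS_2024}.

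The main obstacle is the need for the first- and second-moment statements to hold with quantitative error terms that are \emph{uniform} in $a$ over the two tail ranges (the pointwise-in-$a$ convergences \eqref{mean}--\eqref{variance} are, by themselves, insufficient). I expect the required uniform estimates to come directly out of the explicit integral expressions for $\E \rho_n(b)$ and $\Var \rho_n(b)$ used in \cite{FNS_2024}, since the error terms there arise from Laplace-type asymptotics whose remainder is controlled uniformly in the linear $a$-dependent shift of the upper boundary $\bzn(a)$ on the window $|a| = O(\rL_3 n)$; this is roughly the same kind of uniformity that \refS{S:BE2} will invoke when passing between $\bzn(a)$ and $b_n(a)$.
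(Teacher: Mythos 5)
Your overall strategy (bound $\P(\vp_n > \bzn(a))$ and $e^{-\gl(a)}$ separately and apply the triangle inequality, after noting that both lie near the same endpoint) is exactly the paper's, and your explicit evaluation of $\gl(\pm a_n)$ and the elementary bounds $e^{-x} \leq 1/x$ and $1 - e^{-x} \leq x$ handle the deterministic factor correctly. However, there is a genuine gap in how you propose to control the probability term. You correctly exploit monotonicity of $\E\rho_n(b)$ in $b$ for the lower tail, but for the upper tail you propose to apply Chebyshev pointwise and then argue the ratio $\Var \rho_n(\bzn(a))/[\E\rho_n(\bzn(a))]^2 \lesssim 1/\gl(a)$ \emph{uniformly over all} $a > a_n$, flagging the needed uniformity as an expected-to-be-routine extension of the moment estimates in \cite{FNS_2024}. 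That uniformity is not routine and is in fact the wrong thing to aim for: the range $a > a_n$ is unbounded (not a window of size $O(\rL_3 n)$, as you state near the end), and as $a$ grows, $\bzn(a)$ eventually exceeds $\rL n$, outside the regime where $\Var\rho_n \approx \E\rho_n$. More concretely, the paper's \refL{L:moment method} requires $\tc_n = \Theta(1)$, and the parameter $\tc_n = (d-1)e^{-a/\rL_2 n}$ violates this when $|a|/\rL_2 n \to \infty$, so you cannot simply invoke it pointwise across the tail.

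The paper's proof sidesteps this entirely via one observation that your upper-tail argument omits: $\bzn(a)$ is increasing in $a$, hence $\P(\vp_n > \bzn(a)) \leq \P(\vp_n > \bzn(a_n))$ for all $a > a_n$ (and symmetrically $\P(\vp_n \leq \bzn(a)) \leq \P(\vp_n \leq \bzn(-a_n))$ for $a < -a_n$ at the level of the \emph{probability}, not just the mean). This reduces both tail regimes to the two boundary sequences $\bzn(\pm a_n)$, where $\tc_n \to d-1 > 0$ is bounded, so \refL{L:moment method} applies directly with no uniformity argument at all. Once you make this reduction, your Chebyshev/Markov steps are essentially what the cited lemma packages, and the computation finishes exactly as you wrote. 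So the fix is small, but as stated your proposal would send you off to prove uniform moment asymptotics over an unbounded parameter range that are neither available off the shelf nor actually needed.
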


\begin{proof}
First suppose that $a > a_n$.  Then, using \refL{L:moment method},
\begin{align*}
\P(\vp_n > \bzn(a)) 
&\leq \P(\vp_n > \bzn(a_n)) \\
&\leq (1 + o(1))\,(d - 1)! (\rL n)^{ - (d - 1) \left( 1 - e^{ - \frac{a_n}{\rL_2 n}} \right)} \\
&\sim (d - 1)! \exp[- (d - 1) a_n ] \\
&= (d - 1)! (\rL_2 n)^{-1/2} \rL_3 n;
\end{align*}
and
\begin{align*}
e^{- \gl(a)}
&\leq e^{- \gl(a_n)} 
= \exp\!\left[ - \frac{(\rL_2 n)^{1/2} (\rL_3 n)^{-1}}{(d - 1)!} \right] = o\!\left( (\rL_2 n)^{-1/2} \rL_3 n \right).
\end{align*}

Now suppose instead that $a < - a_n$.  Then, again using \refL{L:moment method},
\begin{align*}
1 - \P(\vp_n > \bzn(a)) 
&= \P(\vp_n \leq \bzn(a)) \\ 
&\leq \P(\vp_n \leq \bzn(- a_n)) \\
&\leq (1 + o(1))\,\frac{1}{(d - 1)!} (\rL n)^{ - (d - 1) \left( e^{\frac{a_n}{\rL_2 n}} - 1 \right)} \\
&\sim (d - 1)! \exp[- (d - 1) a_n ] \\
&= (d - 1)! (\rL_2 n)^{-1/2} \rL_3 n;
\end{align*}
and
\begin{align*}
1 - e^{- \gl(a)}
&\leq 1 - e^{- \gl(-a_n)} \\
&= 1 - \exp\!\left[ - \frac{(\rL_2 n)^{-1/2} \rL_3 n}{(d - 1)!} \right] \\
&\sim \frac{(\rL_2 n)^{-1/2} \rL_3 n}{(d - 1)!}. 
\end{align*}
The proof of the proposition is complete.
\end{proof}

\section{Slight modification $(b_n)$ of the boundary $(\bzn)$}
\label{S:BE2}
For 
our 
Poisson approximation theorem, \refT{T:main Poisson}, we find it a bit more convenient to use the boundary
\begin{equation}
\label{bnOLD}
b_n(a) := \rL n - \rL_3 n - \rL(d - 1) - \rL\!\left( 1 - \frac{a}{\rL_2 n} \right)
\end{equation}
rather than~\eqref{b0n}.  
In this section we establish~\eqref{bzn bn} as the following proposition.

\begin{proposition}
\label{P:bzn bn}
We have
\begin{align}
\lefteqn{\hspace{-.7in}\left| \P(\vp_n > \bzn(a)) - \P(\vp_n > b_n(a)) \right|} \nonumber \\
&= O((\rL_2 n)^{-1 / 2} \rL_3 n)\mbox{\rm \ uniformly for $|a| \leq a_n$}. \label{bzn bnOLD}
\end{align}
\end{proposition}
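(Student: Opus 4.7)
The plan is to bound the probability difference by the expected number of maxima whose $\ell^1$-norm falls in the narrow band $(\bzn(a), b_n(a)]$.  First, setting $x = a / \rL_2 n$, a Taylor expansion yields
\[
b_n(a) - \bzn(a) = -\rL(1 - x) - x = \tfrac{1}{2} x^2 + O(x^3) = \tfrac{1}{2} a^2/(\rL_2 n)^2 + O(a^3/(\rL_2 n)^3).
\]
Since $|x| = O(\rL_3 n / \rL_2 n) = o(1)$ for $|a| \leq a_n$, this shows $b_n(a) \geq \bzn(a)$ for~$n$ large, and
\[
b_n(a) - \bzn(a) = O((\rL_3 n)^2/(\rL_2 n)^2) \quad \text{uniformly for } |a| \leq a_n.
\]

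Next, since $b \mapsto \P(\vp_n > b)$ is monotone nonincreasing, we have
\[
\P(\vp_n > \bzn(a)) - \P(\vp_n > b_n(a)) = \P(\bzn(a) < \vp_n \leq b_n(a)),
\]
and on this last event at least one maximum of $\{\X^{(1)}, \ldots, \X^{(n)}\}$ has $\ell^1$-norm in the band $(\bzn(a), b_n(a)]$, so Markov's inequality gives
\[
\P(\bzn(a) < \vp_n \leq b_n(a)) \leq \E[\rho_n(b_n(a)) - \rho_n(\bzn(a))].
\]
I would evaluate this using the standard identity (which follows from $\|\X^{(1)}\| \sim \Gam(d, 1)$ and $\P(\X^{(1)} \text{ is a maximum} \mid \X^{(1)} = \x) = (1 - e^{-\|\x\|})^{n - 1}$)
\[
\E[\rho_n(b_n(a)) - \rho_n(\bzn(a))] = n \int_{\bzn(a)}^{b_n(a)} \frac{s^{d - 1}}{(d - 1)!}\,e^{-s} (1 - e^{-s})^{n - 1} \dd s.
\]

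Finally, throughout the narrow band $s \in (\bzn(a), b_n(a)]$ we have $s = \rL n - O(\rL_2 n)$, so $s^{d - 1} = (1 + o(1)) (\rL n)^{d - 1}$, $n e^{-s}$ is of order $\rL_2 n$, and $(1 - e^{-s})^{n - 1} = (1 + o(1))\,e^{-n e^{-s}}$ is of order $(\rL n)^{- (d - 1)}$; hence the integrand is $O(\rL_2 n)$ uniformly in~$s$ on the band and uniformly in $|a| \leq a_n$.  Multiplying by the interval length yields
\[
\E[\rho_n(b_n(a)) - \rho_n(\bzn(a))] = O\!\left( \rL_2 n \cdot \frac{(\rL_3 n)^2}{(\rL_2 n)^2} \right) = O\!\left( \frac{(\rL_3 n)^2}{\rL_2 n} \right)
\]
uniformly for $|a| \leq a_n$.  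Since $\rL_3 n = o((\rL_2 n)^{1/2})$, this is $o((\rL_2 n)^{-1/2} \rL_3 n)$, which is (strictly) better than the claimed rate.  The only point requiring care is uniformity in~$a$, but this is not a serious obstacle, since the integrand depends only on~$s$ and the $a$-dependence enters solely through the length of the integration interval, already controlled uniformly above.
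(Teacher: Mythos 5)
Your overall strategy --- reduce via the first-moment bound $\P(\bzn(a) < \vp_n \leq b_n(a)) \leq \E[\rho_n(b_n(a)) - \rho_n(\bzn(a))]$ and then estimate the integral over the narrow band --- is the same as the paper's, but there is a genuine error in the integrand estimate, and the rate $O((\rL_3 n)^2/\rL_2 n) = o((\rL_2 n)^{-1/2}\rL_3 n)$ you arrive at is in fact not achievable by this method.

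The faulty step is the assertion that on the band $s \in (\bzn(a), b_n(a)]$ the factor $e^{-n e^{-s}}$ is ``of order $(\rL n)^{-(d-1)}$'' uniformly for $|a| \le a_n$. On the band one has, uniformly,
\[
n e^{-s} = (d-1)\rL_2 n - (d-1)a + o(1),
\]
and hence
\[
e^{-n e^{-s}} = (1+o(1))\,(\rL n)^{-(d-1)}\, e^{(d-1)a}.
\]
For $a$ near $a_n$ the extra factor $e^{(d-1)a_n} = (\rL_2 n)^{1/2}/\rL_3 n$ tends to infinity, so the integrand is $\Theta\big((\rL_2 n)\, e^{(d-1)a}\big)$, which at $a = a_n$ is $\Theta\big((\rL_2 n)^{3/2}/\rL_3 n\big)$, not $O(\rL_2 n)$.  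Keeping the factor $e^{(d-1)a}$, the first-moment bound is, up to constants and $(1 + o(1))$ factors,
\[
(\rL_2 n)\, e^{(d-1)a} \cdot \frac{a^2}{(\rL_2 n)^2} = \frac{a^2 e^{(d-1)a}}{\rL_2 n};
\]
the function $a \mapsto a^2 e^{(d-1)a}$ is maximized over $|a| \le a_n$ at $a = a_n$, where it equals $(1+o(1))\,a_n^2\,(\rL_2 n)^{1/2}/\rL_3 n$, giving exactly $O\big((\rL_2 n)^{-1/2}\rL_3 n\big)$.  This is the rate stated in the proposition, and it is what the paper obtains by changing variables $z = n e^{-y}$ and bounding the pieces directly; moreover it is not an overestimate of the first-moment bound, since at $a = a_n$ both the interval length and the integrand are simultaneously at their largest.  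So the proposition is correct, but your proof as written derives a false strengthening; once the integrand estimate is fixed, your argument reduces to essentially the paper's proof.
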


\begin{proof}
First note that for every~$a$ satisfying $|a| \leq a_n$ we have $b_n(a) \geq \bzn(a)$, and so the left side of~\eqref{bzn bnOLD} equals
\begin{align*}
\P(\bzn(a) < \vp_n \leq b_n(a)) 
&= \P(\rho_n(b_n(a)) \geq 1,\,\rho_n(\bzn(a)) = 0) \\
&\leq \P(\rho_n(b_n(a)) - \rho_n(\bzn(a)) \geq 1) \\
&\leq \E\!\left[ \rho_n(b_n(a)) - \rho_n(\bzn(a)) \right]
=: \gD_n(a) \equiv \gD_n.
\end{align*}
Now
\begin{align*}
\gD_{n + 1} 
&= \frac{(n + 1)}{(d - 1)!} \int_{b^*_{n + 1}}^{b_{n + 1}} y^{d - 1} e^{-y} (1 - e^{-y})^n \dd y \\
&\leq \frac{(n + 1)}{(d - 1)!} \int_{b^*_{n + 1}}^{b_{n + 1}} y^{d - 1} \exp(- n e^{-y} - y) \dd y \\
&= \frac{1 + n^{-1}}{(d - 1)!} \int_{n e^{- b_{n + 1}}}^{n e^{- b^*_{n + 1}}} (\rL n - \rL z)^{d - 1} e^{-z} \dd z \\
&\leq (1 + n^{-1}) \frac{(\rL n)^{d - 1}}{(d - 1)!} \int_{n e^{- b_{n + 1}}}^{n e^{- b^*_{n + 1}}} e^{-z} \dd z \\
&\leq (1 + n^{-1}) \frac{(\rL n)^{d - 1}}{(d - 1)!} \exp(- n e^{- b_{n + 1}}) \times n (e^{- b^*_{n + 1}} - e^{- b_{n + 1}}). 
\end{align*}
But
\begin{align*}
e^{- b_{n + 1}} 
&= (n + 1)^{-1} [\rL_2(n + 1)] (d - 1) \left( 1 - \frac{a}{\rL_2(n + 1)} \right) \\
&\geq (n + 1)^{-1} [\rL_2(n + 1)] (d - 1) \left( 1 - \frac{a_{n + 1}}{\rL_2(n + 1)} \right) \\
\end{align*}
and
\begin{align*}
\lefteqn{e^{- b^*_{n + 1}} - e^{- b_{n + 1}}} \\
&\leq e^{- b^*_{n + 1}} (b_{n + 1} - b^*_{n + 1}) \\
&= (n + 1)^{-1} [\rL_2(n + 1)] (d - 1) e^{- a / \rL_2 n} \left[- \rL\!\left( 1 - \frac{a}{\rL_2(n + 1)} \right) - \frac{a}{\rL_2(n + 1)} \right] \\
&\leq (1 + o(1)) (n + 1)^{-1} [\rL_2(n + 1)] (d - 1) 
\left[- \rL\!\left( 1 - \frac{a_{n + 1}}{\rL_2(n + 1)} \right) - \frac{a_{n + 1}}{\rL_2(n + 1)} \right] \\
&\sim \frac12 (d - 1) (n + 1)^{-1} [\rL_2(n + 1)]^{-1} a_{n + 1}^2 \\
&\sim \frac{1}{8 (d - 1)} (n + 1)^{-1} [\rL_2(n + 1)]^{-1} [\rL_3(n + 1)]^2,  
\end{align*}
so
\begin{align*}
\gD_{n + 1}
&\leq (1 + o(1)) \frac{[\rL_2(n + 1)]^{-1/2} \rL_3(n + 1)}{8 (d - 1) (d - 1)!}
\end{align*}
and
\[
\gD_n = O((\rL_2 n)^{-1/2} \rL_3 n),
\]
as desired.
\end{proof}

\section{A boundary $(\bun)$ that is appreciably smaller than $(b_n)$}
\label{S:1}

In this section we introduce a boundary $(\bun)$ (not depending on~$a$) that is appreciably smaller than $(b_n)$ (when $a \geq - a_n$) and prove that
\begin{equation}
\label{TV11}
\TV\!\left( \rho_n(b_n), \rho_n(b_n) - \rho_n(\bun) \right) = o((\rL n)^{- (d - 1)})\mbox{\ uniformly in $a \in \R$},
\end{equation}
which is~\eqref{TV1}.  Indeed, let
\begin{equation}
\label{bun}
\bun := \rL n - \rL_3 n - \rL \go_n,
\end{equation}
where the sequence $(\go_n)$ is chosen (not depending on~$a$) arbitrarily subject to the growth conditions
\begin{equation}
\label{omegan}
\go_n \to \infty\mbox{\ \ and\ \ }\rL \go_n = o(\rL_3 n).
\end{equation}
Then
\[
\TV\!\left( \rho_n(b_n), \rho_n(b_n) - \rho_n(\bun) \right) \leq \P(\rho_n(\bun) \neq 0) \leq \E \rho_n(\bun).
\]
By \refL{L:Erhonbun},
\[
\E \rho_n(\bun) = O((\rL n)^{d - 1 - \go_n}) = o((\rL n)^{- (d - 1)}),
\]
completing the proof of~\eqref{TV11}.

\begin{remark}
\label{R:Erhonbun}
Though we will not need it (nor give a proof), more accurate asymptotic determination of $\E \rho_n(\bun)$ is possible, namely,
\[
\E \rho_n(\bun) = \frac{1}{(d - 1)!} (\rL n)^{d - 1 - \go_n} + O((\rL n)^{d - 2 - \go_n} \rL_3 n). 
\] 
\end{remark} 

\section{Poissonization}
\label{S:2}

In this section we introduce a maxima-count $N_n$ that is a Poissonized version of 
$\rho_n(b_n) - \rho_n(\bun)$ and prove that
\begin{equation}
\label{TV22}
\TV\!\left( \rho_n(b_n) - \rho_n(\bun), N_n \right) = o((\rL n)^{- (d - 1)})\mbox{\ uniformly for $|a| \leq a_n$},
\end{equation}
which is~\eqref{TV2}.

Indeed, let $N_n$ count the maxima from a Poisson process with intensity measure
\begin{equation}
\label{mun}
\mu_n(\ddx \x) := n e^{- \| \x \|} {\bf 1}(\x \succ {\bf 0}) \dd \x,
\end{equation}
that belong to the set
\begin{equation}
\label{An}
A_n \equiv A_n(a) := \{ \x \succ {\bf 0}: \bun < \| \x \| \leq b_n \},
\end{equation}
a set which is nonempty because $a \geq - a_n$.

\begin{proof}[Proof of~\eqref{TV22}]
Let
\begin{equation}
\label{Bn}
B_n := \{ \x \succ {\bf 0}: \| \x \| > \bun \},
\end{equation}
and observe that (with no dependence on $a \in \R$)
\begin{align}
p_n 
&:= n^{-1} \mu_n(B_n) \nonumber \\
&= \P(\Gam(d, 1) > \bun) = \frac{1}{(d - 1)!} \int_{\bun}^{\infty} x^{d - 1} e^{-x} \dd x \nonumber \\
&\sim \frac{\bun^{d - 1}}{(d - 1)!} e^{- \bun} 
\sim \frac{1}{(d - 1)!} (\rL n)^{d - 1} n^{-1} (\rL_2 n) \go_n \nonumber \\
&= o((\rL n)^{d - 1 - \go_n}) = o((\rL n)^{- (d - 1)}). \label{pn bound}
\end{align}

To establish~\eqref{TV22}, we first note that observations (whether $\X^{(i)}$'s or Poisson points) with $\ell^1$-norm bounded above by $\bun$ play no role in determining which observations falling in $A_n$ are maxima.  So we can 
proceed as in the proof of \cite[Theorem~4.1]{Sun_2025} and
note (see, e.g.,\ \cite[Proposition~3.6]{Resnick_2008} for justification) that, for each nonnegative integer~$k$, the conditional distribution of $N_n$ given that~$k$ points of the Poisson process fall in $B_n$
is exactly the conditional distribution of $\rho_n(b_n) - \rho_n(\bun)$
given that~$k$ of the~$n$ observations $\X^{(1)}, \ldots, \X^{(n)}$ fall in $B_n$.  Thus, by conditioning 
and~\eqref{TV unequal}, $\TV\!\left( \rho_n(b_n) - \rho_n(\bun), N_n \right)$ is bounded by twice the total variation distance between the counts of values falling in $B_n$; these two counts have distributions Binomial$(n, p_n)$ and Poisson$(n p_n)$.  Therefore, because the total variation distance between these Binomial and Poisson distributions is bounded by $p_n$ (see, e.g.,\ 
\cite[Chapter~1, (1.23)]{Barbour_1992}), we conclude from~\eqref{pn bound} that
\[
\TV\!\left( \rho_n(b_n) - \rho_n(\bun), N_n \right) \leq 2 p_n = o((\rL n)^{- (d - 1)}),
\]
with the bound not depending on~$a$, as claimed at~\eqref{TV22}.
\end{proof}

We have observed that we may, and henceforth do, take $N_n$ to count the maxima from a Poisson process with intensity measure
\begin{equation}
\label{muun}
\muun(\ddx \x) := n e^{- \| \x \|} {\bf 1}(\x \succ {\bf 0},\,\| \x \| > \bun) \dd \x
\end{equation}
that belong to the set $A_n$ of~\eqref{An}.

\section{Conditioning}
\label{S:3}

In this section we introduce 
\begin{equation}
\label{bon}
\bon := \rL n + 2 (d - 1) \rL_2 n
\end{equation}
and $\Non$ such that, for each~$n$, the law of the random variable $\Non$ is the conditional distribution of $N_n$ given that the Poisson point process with intensity measure~\eqref{muun} has no points with $\ell^1$-norm larger than $\bon$.  Put another way, we can (and do) define $\Non$ to be the number of maxima of the Poisson point process with intensity measure
\begin{equation}
\label{muon}
\muon(\ddx \x) := n e^{- \| \x \|} {\bf 1}(\x \succ {\bf 0},\,\bun < \| \x \| \leq \bon) \dd \x
\end{equation}
that belong to the set $A_n$ of~\eqref{An}.  The following proposition establishes~\eqref{TV3}.

\begin{proposition}
\label{P:TV33}
We have
\begin{equation}
\label{TV33}
\TV\!\left( N_n, \Non \right) = O((\rL n)^{-(d - 1)})\mbox{\rm \ uniformly for $|a| \leq a_n$}.
\end{equation}
\end{proposition}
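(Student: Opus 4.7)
The plan is to realize $N_n$ and $\Non$ jointly on a common probability space via Poisson superposition, and then bound $\TV(N_n, \Non)$ by the probability that the coupling fails---an event whose mass is controlled by the expected number of Poisson points of $\ell^1$-norm exceeding $\bon$.

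First I would decompose $\muun = \muon + \nu_n$, where
\[
\nu_n(\dd \x) := n e^{- \|\x\|} {\bf 1}(\x \succ {\bf 0},\,\|\x\| > \bon) \dd \x
\]
is the restriction of $\muun$ to $\{\x \succ {\bf 0}:\,\|\x\| > \bon\}$. By the superposition property of Poisson point processes, a PPP with intensity $\muun$ has the same law as $\Pi_1 \sqcup \Pi_2$, where $\Pi_1 \sim \mathrm{PPP}(\muon)$ and $\Pi_2 \sim \mathrm{PPP}(\nu_n)$ are independent. Take $\Non$ to be the count of maxima in $A_n$ of $\Pi_1$ and $N_n$ to be the count of maxima in $A_n$ of $\Pi_1 \sqcup \Pi_2$; these agree in law with the definitions of $N_n$ and $\Non$ given in the paper.

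On the event $\{\Pi_2 = \emptyset\}$ the two underlying configurations coincide, so $N_n = \Non$. Hence, by the coupling characterization in~\eqref{TV unequal},
\[
\TV(N_n, \Non) \leq \P(N_n \neq \Non) \leq \P(\Pi_2 \neq \emptyset) = 1 - e^{- \nu_n(\R^d)} \leq \nu_n(\R^d).
\]
It remains to show $\nu_n(\R^d) = O((\rL n)^{-(d - 1)})$. But
\[
\nu_n(\R^d) = n\,\P(\Gam(d, 1) > \bon) = \frac{n}{(d - 1)!} \int_{\bon}^{\infty} x^{d - 1} e^{-x} \dd x \sim \frac{n\,\bon^{d - 1} e^{- \bon}}{(d - 1)!},
\]
and substituting $\bon = \rL n + 2 (d - 1) \rL_2 n$ gives $n e^{- \bon} = (\rL n)^{- 2 (d - 1)}$ and $\bon^{d - 1} \sim (\rL n)^{d - 1}$, yielding $\nu_n(\R^d) = O((\rL n)^{-(d - 1)})$. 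Since neither $\bun$ nor $\bon$ depends on $a$, the bound is uniform in $a \in \R$---a fortiori for $|a| \leq a_n$.

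No substantial obstacle arises. The one subtlety worth flagging is that a Poisson point of $\ell^1$-norm larger than $\bon$ can strip some point currently in $A_n$ of its maximum status---even though that new dominating point itself lies outside $A_n$---so $N_n \neq \Non$ is genuinely possible whenever $\Pi_2$ is nonempty. This is not a problem for the argument, since the disagreement event is still wholly contained in $\{\Pi_2 \neq \emptyset\}$, so the crude union-bound-style estimate above suffices and matches the rate claimed in the proposition.
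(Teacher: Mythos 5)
Your proof is correct and lands on essentially the same estimate as the paper, but reaches it by a slightly different mechanism. The paper uses the general inequality $\TV(\cL(X),\cL(X\mid B)) \le \P(B^c)/\P(B)$ (checked via the $\ell^1$-formula~\eqref{TV ell1}), with $B^c = E_n$ the event of a point beyond $\bon$, and then invokes Lemma~\ref{L:PEn} to estimate $\P(E_n)$. You instead realize the conditional law directly through a superposition coupling $\Pi_1 \sqcup \Pi_2$, which makes the conditioning transparent and yields the marginally tighter bound $\TV(N_n, \Non) \le \P(\Pi_2 \neq \emptyset)$ (no denominator $1 - \P(E_n)$); the events $E_n$ and $\{\Pi_2 \neq \emptyset\}$ coincide, since the point of largest $\ell^1$-norm is always a maximum, so the downstream calculation is identical to the paper's. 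Your flagged subtlety---that a point of $\Pi_2$ can strip a point of $\Pi_1$ inside $A_n$ of its maximum status, so $N_n \neq \Non$ truly lives inside $\{\Pi_2 \neq \emptyset\}$ rather than being impossible---is exactly the right thing to notice, and your handling of it is correct. In short: same computation, cleaner bookkeeping, slightly sharper constant.
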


\begin{proof}
It is easy to check using~\eqref{TV ell1} that $\TV\!\left( N_n, \Non \right)$ is bounded by 
\begin{equation}
\label{Eratio}
\frac{\P(E_n)}{1 - \P(E_n)},
\end{equation}
where $E_n$ is the event that the Poisson process with intensity measure~\eqref{mun} has a maximum with $\ell^1$-norm larger than $\bon$; this bound does not depend on~$a$.  A hint that~\eqref{Eratio} is small when~$n$ is large is provided by 
\cite[Theorem~1.8]{Fillboundary_2020} [recall~\eqref{F+n}], which, however, deals with the non-Poissonized model.  Indeed, such a result follows from \refL{L:PEn}, implying a bound of $O((\rL n)^{- (d - 1)})$ on~\eqref{Eratio} and thereby completing the proof.
\end{proof}

\section{Discretization}
\label{S:4}

In this section, by discretization we obtain approximations $Z_{n, m}$ to $\Non$, where each $Z_{n, m}$ (one random variable for each $m \geq 0$)  is a sum of indicator random variables $Z_{n, m, \ga}$, and prove (for fixed~$n$ and~$a$) that~\eqref{TV4} holds:
\begin{equation}
\label{TV44}
\TV\!\left( \Non, Z_{n, m} \right) \to 0\mbox{\ as $m \to \infty$}.
\end{equation}

Fix~$n$ and define
\begin{equation}
\label{Aon}
\Aon := \{ \x \succ {\bf 0}:\bun < \| \x \| \leq \bon \}.
\end{equation}
Enclose $\Aon$ in a cube of fixed finite side-length.  For each integer $m \geq 0$, decompose this cube into sub-cubes $G_{n, m, \ga}$ of side-length $2^{-m}$.  (To what extent each $G_{n, m, \ga}$ contains its boundary will be immaterial.)  Let $\Pion$ denote a Poisson process with intensity measure~\eqref{muon}, and let  $\Mon$ denote the random measure that counts maxima of $\Pion$---so that, in particular, 
\begin{equation}
\label{NonMon}
\Non = \Mon(A_n).  
\end{equation}
With~$\wedge$ denoting 
minimum, let
\begin{equation}
\label{Znmga}
Z_{n, m, \ga} := 1 \wedge \Mon(A_n \cap G_{n, m, \ga})
\end{equation}
and
\begin{equation}
\label{Inm}
I_{n, m} := \{ \ga:A_n \cap G_{n, m, \ga} \neq \varnothing \}.
\end{equation} 
All operations over index~$\ga$ henceforth are taken over the index set $I_{n, m}$ unless otherwise specified.
Let
\begin{equation}
\label{Znm}
Z_{n, m} := \sum_{\ga} Z_{n, m, \ga}.
\end{equation}

\begin{proof}[Proof of~\eqref{TV44}]
Using~\eqref{TV unequal} for the first inequality, we have
\begin{align}
\lefteqn{\hspace{-.5in}1 - \TV\!\left( \Non, Z_{n, m} \right)} \nonumber \\
&\geq \P(Z_{n, m} = \Non) \label{P1} \\
&\geq \P\left( \cap_{\ga} \left\{ \Pion \left( \Aon \cap G_{n, m, \ga} \right) \leq 1 \right\} \right) \nonumber \\
&= \exp\left[ - \sum_{\ga} \muon\left( \Aon \cap G_{n, m, \ga} \right) \right] 
\prod_{\ga} \left[ 1 + \muon\left( \Aon \cap G_{n, m, \ga} \right) \right] \nonumber \\ 
&\to 1\mbox{\ as $m \to \infty$}, \label{P2}
\end{align}
as desired, where the last-step convergence follows by routine application of 
\cite[Lemma preceding Theorem 7.1.2]{Chung_2001}.
\end{proof}

\section{A sequence of Poisson distributions indexed by~$m$}
\label{S:6}

In this section, recalling the notation $\gl_{n, m} = \E Z_{n, m}$, we prove (for each fixed~$n$ and~$a$) that~\eqref{TV6} holds:
\begin{equation}
\label{TV66}
\TV\!\left( \Po(\glnm), \Po(\E \Non) \right) \to 0\mbox{\ as $m \to \infty$}.
\end{equation}

\begin{proof}
From the argument at \eqref{P1}--\eqref{P2} we see convergence in probability:
\[
Z_{n, m} \Pto \Non\mbox{\ as $m \to \infty$}.
\]
Noting from~\eqref{Znmga} that $Z_{n, m}$ is monotonically nondecreasing in~$m$, the same convergence holds almost surely (see \cite[Problem~20.22(a)]{BillingsleyPM_2012}).  Therefore, by the monotone convergence theorem, we also have $L^1$-convergence and hence
\begin{equation}
\label{convergence of means}
\gl_{n, m} \uparrow \E \Non\mbox{\ as $m \uparrow \infty$}.
\end{equation}
Thus, utilizing the semigroup property of Poisson distributions under convolution and~\eqref{TV unequal} at the initial step,
\begin{align}
\TV\!\left( \Po(\glnm), \Po(\E \Non) \right)
&\leq \P\left( \Po\!\left( \E \Non - \gl_{n, m} \right) \neq 0 \right) \label{P3} \\
&= 1 - \exp\!\left[- \left( \E \Non - \gl_{n, m} \right) \right] \label{P4} \\
&\to 0\mbox{\ as $m \to \infty$}. \nonumber
\pushQED{\qed} 
\qedhere
\popQED
\end{align}
\noqed
\end{proof}

\section{A sequence of Poisson distributions indexed by~$n$}
\label{S:7}

In this section we prove that~\eqref{TV7} holds:
\begin{equation}
\label{TV77}
\TV\!\left( \Po(\E \Non), \Po(\gl) \right) = O((\rL n)^{-1} (\rL_2 n)^{1/2})\mbox{\ uniformly for $|a| \leq a_n$}.
\end{equation}
By the same sort of argument as at \eqref{P3}--\eqref{P4}, we need only prove the following lemma.

\begin{lemma}
\label{L:two means}
As $n \to \infty$, we have
\begin{equation}
\label{two means}
\E \Non = \gl + O((\rL n)^{-1} (\rL_2 n)^{1/2})\mbox{\rm \ uniformly for $|a| \leq a_n$}.
\end{equation}
\end{lemma}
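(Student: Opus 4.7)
The plan is to apply the Slivnyak--Mecke formula to the Poisson process $\Pion$ with intensity $\muon$, which gives
\[
\E \Non = \int_{A_n} n e^{-\|\x\|} \exp\!\bigl( - \eta(\x) \bigr) \dd \x,
\]
where $\eta(\x) := \muon(\{\y:\y \succ \x\})$; the constraint $\|\y\| > \bun$ inside this measure is automatic, since $\y \succ \x$ forces $\|\y\| > \|\x\| > \bun$ for $\x \in A_n$. By the change of variable $\z = \y - \x$, one has $n \int_{\y \succ \x} e^{-\|\y\|} \dd \y = n e^{-\|\x\|}$, so $\eta(\x) = n e^{-\|\x\|} - n e^{-\|\x\|} \P(\Gam(d,1) > \bon - \|\x\|)$. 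Since $\bon - \|\x\| \geq \bon - b_n \sim 3(d-1) \rL_2 n$ and $n e^{-\bon} = (\rL n)^{-2(d-1)}$, the correction to $\eta$ coming from truncation at $\bon$ is super-polynomially small in $\rL n$, uniformly in $\x \in A_n$; this causes only a negligible multiplicative error in $\exp(-\eta(\x))$, well inside the target error.

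Next I would reduce to a one-dimensional integral over $r := \|\x\|$, using that the set $\{\x \succ {\bf 0}: \|\x\| \leq r\}$ has $d$-dimensional Lebesgue measure $r^d/d!$, so integration on $\ell^1$-slices carries the weight $r^{d-1}/(d-1)!$; this yields
\[
\E \Non = \int_{\bun}^{b_n} \frac{n r^{d-1} e^{-r}}{(d-1)!} \exp\!\bigl( - n e^{-r} \bigr) \dd r + o\!\bigl((\rL n)^{-1}(\rL_2 n)^{1/2}\bigr).
\]
The substitution $u := n e^{-r}$ (so that $\dd r = - \dd u / u$ and $r = \rL n - \rL u$) then transforms the main integral to
\[
\int_{\gam}^{\gL} \frac{(\rL n - \rL u)^{d-1}}{(d-1)!} e^{-u} \dd u,
\]
where $\gam := n e^{-b_n} = (d-1)(\rL_2 n - a)$ and $\gL := n e^{-\bun} = (\rL_2 n) \go_n$.

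The core of the analysis will then be the binomial expansion
\[
(\rL n - \rL u)^{d-1} = \sum_{k=0}^{d-1} \binom{d-1}{k} (\rL n)^{d-1-k} (-\rL u)^k.
\]
The $k = 0$ contribution is $\tfrac{(\rL n)^{d-1}}{(d-1)!}[e^{-\gam} - e^{-\gL}]$; since $(\rL n)^{d-1} e^{-\gam} = (d-1)!\,\gl$ by construction of $b_n$ and $(\rL n)^{d-1} e^{-\gL}$ is super-polynomially small, this yields precisely $\gl$ plus negligible error. For $k \geq 1$, integration by parts readily gives $\int_\gam^\infty (\rL u)^k e^{-u} \dd u = e^{-\gam} (\rL \gam)^k (1 + O(1/(\gam \rL \gam)))$, so the $k$th contribution has magnitude of order $\gl (\rL \gam / \rL n)^k$.

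The main obstacle, and the step that pins down the stated rate, will be controlling the $k = 1$ term uniformly for $|a| \leq a_n$. Since $\gam = (d-1)(\rL_2 n - a)$ with $|a| \leq a_n = o(\rL_2 n)$, one has $\rL \gam = \rL_3 n + O(1)$ uniformly. Moreover, from~\eqref{an} one gets $(d-1) a_n = \tfrac12 \rL_3 n - \rL_4 n$, so that $e^{(d-1) a_n} = (\rL_2 n)^{1/2}/\rL_3 n$ and hence $\gl \leq (\rL_2 n)^{1/2} / ((d-1)!\,\rL_3 n)$ uniformly for $|a| \leq a_n$. The $k = 1$ term is therefore of order $\gl \cdot \rL_3 n / \rL n = O((\rL n)^{-1} (\rL_2 n)^{1/2})$, precisely the stated bound; the $k \geq 2$ contributions each pick up an additional factor $\rL_3 n / \rL n \to 0$ and so are of strictly smaller order, completing the proof.
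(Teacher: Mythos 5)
Your proposal is correct and arrives at the stated rate via the same high-level route as the paper---the Mecke formula to express $\E \Non$ as an integral against $\muon$, reduction to a one-dimensional integral in $r = \|\x\|$ using the $r^{d-1}/(d-1)!$ slice weight, and the substitution $u = n e^{-r}$---but the detailed analysis of the resulting integral is genuinely different. The paper carries the truncation correction $\han$ through a \emph{shifted} change of variable $y = \rL n - \rL z + \rL(1 \pm \han)$ and then bounds $[\rL n - \rL z + \rL(1 \pm \han)]^{d-1}$ crudely above by $(\rL n)^{d-1}$ and below by $(\rL n - \rL \gbun)^{d-1}$, extracting two-sided bounds whose mismatch gives the $O(\gl\,\rL_3 n/\rL n)$ error. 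You instead peel off the truncation correction at the outset as a uniformly small multiplicative factor, and then binomially expand $(\rL n - \rL u)^{d-1}$, identifying the $k=0$ term as exactly $\gl$ (up to a tiny $e^{-\gL}$ tail) and the $k=1$ term as the dominant $O(\gl\,\rL_3 n/\rL n)$ correction; the $k \geq 2$ terms then visibly decay by extra factors of $\rL_3 n/\rL n$. Your version is a bit more transparent about where the error enters and isolates the leading correction term, which the paper's cruder inequalities conceal. Two small slips, neither affecting the argument: $\bon - b_n = 2(d-1)\rL_2 n + \rL_3 n + O(1) \sim 2(d-1)\rL_2 n$, not $3(d-1)\rL_2 n$; and the truncation contribution to $\eta(\x)$ is only \emph{polynomially} (not super-polynomially) small in $\rL n$, of order $(\rL n)^{-2(d-1)}(\rL_2 n)^{d-1+o(1)}$, which is nonetheless comfortably $o((\rL n)^{-1})$ as your argument requires.
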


\begin{proof}
Given $\x \succ {\bf 0}$, let
\[
O^+_{\x} := \{ \y:\y \succ \x \}
\]
denote the positive orthant in $\R^d$, translated by~$\x$.
Recalling display~\eqref{NonMon} and the notation $\Pion$ introduced just before it,
\begin{equation}
\label{NonMon calculation}
\E \Non
= \E \Mon(A_n)
= \int_{A_n} \P\!\left( \Pion(O^+_{\x}) = 0 \right)\,\muon(\ddx \x).
\end{equation}
Now for $\x \in A_n$ we have
\begin{equation}
\label{PPionO+x=0}
\P\!\left( \Pion(O^+_{\x}) = 0 \right) = \exp\!\left[ - \int_{\y \in \Aon:\y \succ \x}\,\muon(\ddx \y) \right],
\end{equation}
and the integral appearing in~\eqref{PPionO+x=0} equals
\begin{equation}
\label{integral1}
n e^{- \| \x \|} \int_{\z \succ 0:\,\| \z \| \leq \bon - \| \x \|} e^{- \| \z \|} \dd \z.  
\end{equation}
The integral factor in~\eqref{integral1} equals $\P(\Gam(d, 1) \leq \bon - \| \x \|)$ which, for every $\x \in A_n$, lies between the upper bound of~$1$ and the lower bound of
\begin{align}
\lefteqn{\hspace{-.5in}\P(\Gam(d, 1) \leq \bon - b_n)} \nonumber \\
&= 1 - \P(\Gam(d, 1) > 2 (d - 1) \rL_2 n + \rL_3 n + \rL c_n); \label{LB1}
\end{align}
further, uniformly for $|a| \leq a_n$ we have
\begin{align}
\lefteqn{\P(\Gam(d, 1) > 2 (d - 1) \rL_2 n + \rL_3 n + \rL c_n)} \nonumber \\
&\sim \frac{1}{(d - 1)!} [2 (d - 1) \rL_2 n]^{d - 1} (\rL n)^{- 2 (d - 1)} (\rL_2 n)^{-1} c_n^{-1} \nonumber \\
&= \Theta\!\left( (\rL n)^{- 2 (d - 1)} (\rL_2 n)^{d - 2} \right). \label{Theta}
\end{align}
Putting the pieces of~\eqref{NonMon calculation}--\eqref{Theta} together and recalling~\eqref{muon}, we find
\begin{equation}
\label{PPionO+x=0 2}
\P\!\left( \Pion(O^+_{\x}) = 0 \right)
= \exp\!\left[ - n e^{- \| \x \|} (1 - u_{n, a}(\| \x \|)) \right]\mbox{\ for $\x \in A_n$}
\end{equation}
with
\begin{equation}
\label{uny}
u_{n, a}(y) = O\!\left( (\rL n)^{- 2 (d - 1)} (\rL_2 n)^{d - 2} \right)\mbox{\ uniformly for $y \in (\bun, b_n]$ and $|a| \leq a_n$}
\end{equation}
and thence
\begin{equation}
\label{ENon}
\E \Non 
= n \int_{A_n} e^{- \| \x \|} \exp\!\left[ - n e^{- \| \x \|} (1 - u_{n, a}(\| \x \|)) \right] \dd \x.
\end{equation}

Continuing from~\eqref{ENon},
\begin{equation}
\label{ENon2}
\E \Non 
= n \int_{\bun}^{b_n} \frac{y^{d - 1}}{(d - 1)!} e^{- y} \exp\!\left[ - (1 - u_{n, a}(y)) n e^{- y} \right] \dd y.
\end{equation}
Let
\begin{equation}
\label{han}
\han := \sup_{|a| \leq a_n,\,y \in (\bun, b_n]} |u_{n, a}(y)| = O\!\left( (\rL n)^{- 2 (d - 1)} (\rL_2 n)^{d - 2} \right). 
\end{equation}
Then, applying the change of variables $y = \rL n - \rL z + \rL(1 + \han)$, we find, uniformly for $|a| \leq a_n$, that
\begin{align*}
\lefteqn{\E \Non} \\ 
&\geq \frac{(1 + \han)^{-1}}{(d - 1)!} \int_{(1 + \han) \gb_n}^{(1 + \han) \gbun} [\rL n - \rL z + \rL(1 + \han)]^{d - 1} e^{- z} \dd z \\
&\geq \frac{(1 + \han)^{-1}}{(d - 1)!} (\rL n - \rL \gbun)^{d - 1} 
\left\{ \exp\!\left[- (1 + \han) \gb_n \right] - \exp\!\left[- (1 + \han) \gbun \right] \right\} \\
&\geq \frac{(1 + \han)^{-1}}{(d - 1)!} (\rL n - \rL \gbun)^{d - 1} 
\left\{ \exp\!\left[- (1 + \han) \gb_n \right] - e^{- \gbun} \right\} \\
&= \frac{\left[ 1 - O\!\left( (\rL n)^{- 2 (d - 1)} (\rL_2 n)^{d - 2} \right) \right]}{(d - 1)!}  
(\rL n)^{d - 1} \left[1 - (1 - o(1)) (d - 1) \frac{\rL_3 n}{\rL n} \right] \\
&{} \qquad \times 
e^{ - \gb_n} \left\{ \left[ 1 - O\!\left( (\rL n)^{- 2 (d - 1)} (\rL_2 n)^{d - 1} \right) \right] - e^{- (\gbun - \gb_n)} \right\} \\
&= \left[1 - (1 - o(1)) (d - 1) \frac{\rL_3 n}{\rL n} \right] \frac{(\rL n)^{d - 1 - c_n}}{(d - 1)!} \\
&= \left[1 - (1 - o(1)) (d - 1) \frac{\rL_3 n}{\rL n} \right] \gl 
= \gl -  O\!\left( \gl \frac{\rL_3 n}{\rL n} \right).
\end{align*}

Similarly, applying the change of variables $y = \rL n - \rL z + \rL(1 - \han)$, we find, uniformly for $|a| \leq a_n$, that
\begin{align*}
\lefteqn{\E \Non} \\ 
&\leq \frac{(1 - \han)^{-1}}{(d - 1)!} \int_{(1 - \han) \gb_n}^{(1 - \han) \gbun} [\rL n - \rL z + \rL(1 - \han)]^{d - 1} e^{- z} \dd z \\
&\leq \frac{(1 - \han)^{-1}}{(d - 1)!} (\rL n)^{d - 1} 
\left\{ \exp\!\left[- (1 - \han) \gb_n \right] - \exp\!\left[- (1 - \han) \gbun \right] \right\} \\
&\leq \frac{(1 - \han)^{-1}}{(d - 1)!} (\rL n)^{d - 1} 
\left\{ \exp\!\left[- (1 - \han) \gb_n \right] - e^{- \gbun} \right\} \\
&= \frac{\left[ 1 + O\!\left( (\rL n)^{- 2 (d - 1)} (\rL_2 n)^{d - 2} \right) \right]}{(d - 1)!}  
(\rL n)^{d - 1} \\
&{} \qquad \times 
e^{ - \gb_n} \left\{ \left[ 1 + O\!\left( (\rL n)^{- 2 (d - 1)} (\rL_2 n)^{d - 1} \right) \right] - e^{- (\gbun - \gb_n)} \right\} \\
&= \left[ 1 + O\!\left( (\rL n)^{- 2 (d - 1)} (\rL_2 n)^{d - 1} \right) \right] \frac{(\rL n)^{d - 1 - c_n}}{(d - 1)!} \\
&= \left[ 1 + O\!\left( (\rL n)^{- 2 (d - 1)} (\rL_2 n)^{d - 1} \right) \right] \gl \\
&= \gl +  O\!\left( \gl (\rL n)^{- 2 (d - 1)} (\rL_2 n)^{d - 1} \right) = \gl + o\!\left( \gl \frac{\rL_3 n}{\rL n} \right).
\end{align*}

But for $a \leq a_n$ we have
\[
\gl \leq \frac{e^{(d - 1) a_n}}{(d - 1)!} = \frac{(\rL_2 n)^{1/2} (\rL_3 n)^{-1}}{(d - 1)!},
\]
so we conclude that
\[
\E \Non = \gl + O((\rL n)^{-1} (\rL_2 n)^{1/2}),
\]
completing the proof of~\eqref{two means}.
\end{proof}

\section{Poisson approximation for $Z_{n, m}$}
\label{S:5}

In this section, centrally important to the proof of \refT{T:main Poisson}, we employ the technique of \cite{Arratia_1989} to prove that~\eqref{TV5} holds, namely, that
\begin{align}
\lefteqn{\hspace{-.5in}\limsup_{m \to \infty} \TV\!\left( Z_{n, m}, \Po(\glnm) \right)} \nonumber \\ 
&= O\!\left( (\rL_2 n)^{ - \left( d - \tfrac32 \right)} (\rL_3 n)^{-1} \right)\mbox{\ uniformly for $|a| \leq a_n$} \label{TV55}
\end{align}
as $n \to \infty$, where $\glnm := \E Z_{n, m}$.  We will make a reasonable effort to match the notation used in~\cite{Arratia_1989}, but, since we have already used $b_n$ for the boundary~\eqref{bn}, we will use fraktur font to write 
$\fb_1, \fb_2, \fb_3$ for the three key quantities $b_1, b_2, b_3$ in~\cite{Arratia_1989}. 

The goal, then, of this section is to establish a Poisson approximation, with error bound, for $Z_{n, m}$ defined at~\eqref{Znm}, with $Z_{n, m, \ga}$ defined at~\eqref{Znmga}.  Treating the sub-cube $G_{n, m, \ga}$ as a closed cube, let $\gu_{n, m, \ga}$ denote its unique minimum (\ie,\ ``southwestern-most point'').  Note first that if we make, for each $\ga \in I_{n, m}$ (with $I_{n, m}$ defined at~\eqref{Inm}), the definition
\begin{equation}
\label{Bnmga}
B_{n, m, \ga} := \{ \gam \in I_{n, m}:\| \gu_{n, m, \ga} \vee \gu_{n, m, \gam} \| < \bon \}, 
\end{equation}
where $\vee$ denotes coordinate-wise maximum, then
\[
\mbox{for every $\ga \in I_{n, m}$:\ $Z_{n, m, \ga}$ is independent of 
$\gs\langle Z_{n, m, \gam}:\gam \in I_{n, m} - B_{n, m, \ga} \rangle$},
\] 
so $\fb_3 = 0$ in the notation of~\cite{Arratia_1989}.  In~\cite{Arratia_1989}, see Theorem~1 and the penultimate full paragraph on p.~12 of~\cite{Arratia_1989}, from which we also find (noting the factor-of-$2$ difference in their definition of total variation distance) that
\begin{equation}
\label{TVfb}
\TV\!\left( Z_{n, m}, \Po(\glnm) \right) \leq \fb_1 + \fb_2 = \E Z_{n, m}^2 - \E \Po(\glnm)^2 + 2 \fb_1.
\end{equation}
With
\begin{equation}
\label{pnmga}
p_{n, m, \ga} := \P(Z_{n, m, \ga} = 1)
\end{equation}
and
\begin{equation}
p_{n, m, \ga, \gam} := \E(Z_{n, m, \ga} Z_{n, m, \gam}),
\end{equation}
the notation in~\eqref{TVfb} is that
\begin{align}
\fb_1 \equiv \fb_1(n, m) 
&= \sum_{\ga \in I_{n, m}} \sum_{\gam \in B_{n, m, \ga}} p_{n, m, \ga}\,p_{n, m, \gam}, \label{fb1def} \\
\fb_2 \equiv \fb_2(n, m) 
&= \sum_{\ga \in I_{n, m}} \sum_{\gam \neq \ga:\,\ga \in B_{n, m, \ga}} p_{n, m, \ga, \gam}. \nonumber
\end{align}
Note that if we use the second of the two expressions for the upper bound in~\eqref{TVfb}, then we need treat only $\fb_1$, not $\fb_2$.

For the first of the three terms in that bound, we claim that
\begin{equation}
\label{1 of 3}
\E Z_{n, m}^2 \to \E \Non^2\mbox{\ as $m \to \infty$}.
\end{equation}
Indeed, by the same argument leading to~\eqref{convergence of means}, $Z_{n, m}$ converges in $L^2$ to $\Non$, and hence (by the second assertion in~\cite[Theorem 4.5.1]{Chung_2001} with $r = 2$) \eqref{1 of 3} holds.

The (subtracted) second of the three terms is
\begin{equation}
\label{2 of 3}
\E \Po(\glnm)^2 = \glnm + \glnm^2 \to \E \Non + \left( \E \Non \right)^2\mbox{\ as $m \to \infty$},
\end{equation}
where the convergence here follows from~\eqref{convergence of means}.  Therefore, by~\eqref{1 of 3}--\eqref{2 of 3},
\begin{equation}
\label{1--2 of 3}
\E Z_{n, m}^2 - \E \Po(\glnm)^2 \to \Var \Non - \E \Non\mbox{\ as $m \to \infty$}. 
\end{equation}

For the proof of~\eqref{TV55}, we need also \refL{L:VarNon}, which establishes
\begin{equation}
\label{VarNon again}
\Var \Non = \E \Non + O\!\left( (\rL_2 n)^{ - \left( d - \tfrac32 \right)} (\rL_3 n)^{-1} \right)\mbox{\ uniformly for $|a| \leq a_n$},
\end{equation}
and the following lemma, whose proof is given in \refS{S:5.1}.

\begin{lemma}
\label{L:limsup fb1}
As $n \to \infty$ we have
\begin{equation}
\label{limsup fb1}
\limsup_{m \to \infty} \fb_1(n, m) = O((\rL n)^{- (d - 1)} (\rL_2 n)^d (\rL_3 n)^{-2})\mbox{\rm \ uniformly for $|a| \leq a_n$}.
\end{equation}
\end{lemma}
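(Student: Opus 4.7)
Since $Z_{n,m,\ga}$ is $\{0, 1\}$-valued, $p_{n,m,\ga} = \E Z_{n,m,\ga} \leq \E \Mon(A_n \cap G_{n,m,\ga})$, and by \eqref{PPionO+x=0 2}, $\E \Mon(A_n \cap G_{n,m,\ga}) = \int_{A_n \cap G_{n,m,\ga}} f_n(\z)\,\muon(\ddx \z)$, where $f_n(\z) := \exp[-(1 - u_{n,a}(\|\z\|))\,n e^{-\|\z\|}]$. Hence
\[
\fb_1(n, m) \leq \iint_{A_n \times A_n} f_n(\x) f_n(\y) H_{n,m}(\x, \y)\,\muon(\ddx \x)\,\muon(\ddx \y),
\]
where $H_{n,m}(\x, \y)$ equals $1$ precisely when the cells containing $\x$ and $\y$ satisfy the $B$-condition of \eqref{Bnmga}. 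Since $H_{n,m}(\x, \y) \to \mathbf{1}\{\|\x \vee \y\| \leq \bon\}$ off a $\muon \otimes \muon$-null set as $m \to \infty$, dominated convergence (using $f_n f_n \leq 1$ and $\muon$ finite on the bounded set $\Aon$) gives
\[
\limsup_{m \to \infty} \fb_1(n, m) \leq J_n := \iint_{A_n^2,\,\|\x \vee \y\| \leq \bon} f_n(\x) f_n(\y)\,\muon(\ddx \x)\,\muon(\ddx \y).
\]

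\textbf{Step 2 (strategy).} The unconstrained double integral $\iint_{A_n^2} f_n(\x) f_n(\y)\,\muon\,\muon = (\E \Non)^2$ satisfies $(\E \Non)^2 = \gl^2 + O(\gl \cdot (\rL n)^{-1}(\rL_2 n)^{1/2})$ by \refL{L:two means}, with $\gl^2 \leq (\rL_2 n)(\rL_3 n)^{-2}/((d-1)!)^2$ uniformly for $|a| \leq a_n$. This overshoots the target by a factor $(\rL_2 n/\rL n)^{-(d-1)}$, which must be recovered from the constraint $\|\x \vee \y\| \leq \bon$. Geometrically, this constraint confines $\y$ to a ``direction cone'' around $\x$ whose effective $(d-1)$-dimensional measure on $\cS_{d-1}$ is of order $(\rL_2 n/\rL n)^{d-1}$, providing exactly the needed reduction. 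I therefore aim to show $J_n = O(\gl^2 (\rL_2 n/\rL n)^{d-1})$.

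\textbf{Step 3 (implementation and main obstacle).} For fixed $\x \in A_n$, decompose $\y = \x + \gam^+ - \gam^-$ with $\gam^+, \gam^- \succ {\bf 0}$ having disjoint supports, and sum over the sign pattern $K := \{j: y_j \geq x_j\}$ (so $\gam^+$ is supported on $K$ and $\gam^-$ on $K^c$). The key identities $\|\y\| = \|\x\| + \|\gam^+\| - \|\gam^-\|$ and $\|\x \vee \y\| = \|\x\| + \|\gam^+\|$ turn the constraints into $\|\gam^+\| \leq \bon - \|\x\|$, $\gam^-_j \leq x_j$ for $j \in K^c$, and $\|\y\| \in (\bun, b_n]$. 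Setting $s := \|\gam^+\|$ and $t := \|\gam^-\|$, I integrate over directions of $\gam^+, \gam^-$ via the surface factors $s^{|K|-1}/(|K|-1)!$ and $t^{|K^c|-1}/(|K^c|-1)!$ (using $t \leq \|\x_{K^c}\|$ as an upper bound in place of the full box constraint), then substitute $u = n e^{-\|\y\|}$ in the $t$-integration so that $f_n(\y) = \exp[-(1-\han)u]$ and $dt = du/u$. The resulting one-dimensional integrals in $s$ and $u$ admit explicit asymptotic estimates via exponential decay; summing over the (fixed) number of sign patterns and integrating the inner result against $f_n(\x)\,\muon(\ddx \x)$ (of total mass $\approx \gl$) yields $J_n = O(\gl^2 (\rL_2 n/\rL n)^{d-1})$, matching the claimed rate. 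The main obstacle is the mixed-sign cases $1 \leq |K| \leq d-1$, where $e^{-\|\y\|}$ contributes a factor $e^{+t}$ that grows with $t = \|\gam^-\|$; a naive bound would introduce a spurious factor of $n$. The substitution $u = n e^{-\|\y\|}$ is the crucial step, pairing this growing factor with the rapid decay of $f_n$, keeping the integrand bounded and delivering the sharp reduction from $\gl^2$ to $\gl^2 (\rL_2 n/\rL n)^{d-1}$.
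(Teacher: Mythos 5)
Your Step~1 reduction to the integral $J_n$ essentially parallels the paper's (display~\eqref{Jn}): both bound $p_{n,m,\ga}$ by the mean of the local count of maxima of $\Pion$ landing in $A_n\cap G_{n,m,\ga}$ (the paper via the event inclusion above~\eqref{P6}, you via the identity behind~\eqref{NonMon calculation} together with~\eqref{PPionO+x=0 2}), and both let $m\to\infty$ through a Riemann-sum / dominated-convergence step; the resulting $J_n$'s agree up to a $1+o(1)$ factor. Your Step~3, however, reaches $J_n=O\bigl(\gl^2(\rL_2 n/\rL n)^{d-1}\bigr)$ by a genuinely different route. The paper changes to polar coordinates $\x=z\uu$, $\y=w\vv$ with $\uu,\vv\in\cS_{d-1}$, decouples the radial integral (bounded by $b_n^{2(d-1)}\bigl(\int e^{-s}e^{-e^{-s}}\dd s\bigr)^2\leq e^{2(d-1)a}$) from the angular integral $q_n=\P\bigl(\sum_j(U_j\vee V_j)<\bon/b_n\bigr)$ over two independent uniform points of $\cS_{d-1}$, and shows $q_n=O(\eps_n^{d-1})$ by a union bound over the sign pattern $\{j:V_j>U_j\}$ followed by a relaxed change of variables. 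You instead work fiberwise in $\x$: you split $\y=\x+\gam^+-\gam^-$, sum over the sign pattern $K=\{j:y_j\geq x_j\}$ (note this is on the vectors themselves, not on their normalized directions, so it is not the paper's pattern), pass to $s=\|\gam^+\|$, $t=\|\gam^-\|$, and use $u=ne^{-\|\y\|}$ to balance the $e^{+t}$ growth against the decay of $f_n$. Both proofs ultimately read off $(\rL_2 n/\rL n)^{d-1}$ from the same geometric source --- the constraint $\|\x\vee\y\|\leq\bon$ confines $s$ (and, at the dominant scale $u\asymp ne^{-b_n}$, also $t$) to an interval of length $O(\rL_2 n)$ rather than $O(\rL n)$ --- so they are the same union-bound-over-sign-patterns argument viewed in different coordinates. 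The paper's decoupling is arguably cleaner, since the radial factor is explicit and $q_n$ has a self-contained probabilistic meaning; your fiberwise version avoids passing from $\sum_j(u_j z\vee v_j w)<\bon$ to $\sum_j(u_j\vee v_j)<\bon/b_n$, at the cost of extra casework (the extreme patterns $K=\varnothing,[d]$ and the two regimes $\|\x\|+s\gtrless b_n$ need separate treatment). You assert but do not carry out the final estimate in Step~3; I checked that it does close, with the $K\in\{\varnothing,[d]\}$ terms contributing only $O\bigl((\rL_3 n/\rL n)^{d-1}\bigr)$ per fiber and the mixed patterns giving the full $O\bigl((\rL_2 n/\rL n)^{d-1}\bigr)$.
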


We are now ready for the proof of~\eqref{TV55}.
\begin{proof}[Proof of~\eqref{TV55}]
Simply combine~\eqref{TVfb} and~\eqref{1--2 of 3}--\eqref{limsup fb1}.
\end{proof}

\subsection{The quantity $\fb_1(n, m)$}
\label{S:5.1}

\begin{proof}[Proof of \refL{L:limsup fb1}]
Recall from~\eqref{fb1def}, \eqref{Bnmga}, \eqref{pnmga}, and \eqref{Znmga} that
\begin{align*}
\lefteqn{\fb_1(n, m)} \\ 
&= \sum_{\ga \in I_{n, m}}\,\sum_{\substack{\gam \in I_{n, m}: \\ \| \gu_{n, m, \ga} \vee \gu_{n, m, \gam} \| < \bon}} 
\P(\Non(A_n \cap G_{n, m, \ga}) \geq 1)\,\P(\Non(A_n \cap G_{n, m, \gam}) \geq 1).
\end{align*}
Again treating the sub-cube $G_{n, m, \ga}$ as a closed cube, let $\gov_{n, m, \ga}$ denote its unique maximum (\ie,\ ``northeastern-most point'').  Then for $\ga \in I_{n, m}$ we have
\[
\{ \Non(A_n \cap G_{n, m, \ga}) \geq 1 \} 
\subseteq \{ \Pion(A_n \cap G_{n, m, \ga}) \geq 1 \} \cap \left\{ \Pion\!\left( O^+_{\gov_{n, m, \ga}} \right) = 0 \right\}
\] 
and so
\begin{align}
\lefteqn{\P(\Non(A_n \cap G_{n, m, \ga}) \geq 1)} \nonumber \\
&\leq \left[ 1 - \exp\!\left(- \int_{\x \in G_{n, m, \ga}} n e^{- \| \x \|} \dd \x \right) \right]
\exp\!\left(- \int_{\y \succ \gov_{n, m, \ga}:\,\| \y \| \leq \bon} n e^{- \| \y \|} \dd \y \right). \label{P6}
\end{align}

Consider the two factors on the right in~\eqref{P6}.  Since the volume of the cube $G_{n, m, \ga}$ is $2^{- d m}$, the first factor is bounded by
\[
\int_{\x \in G_{n, m, \ga}} n e^{- \| \x \|} \dd \x \leq n 2^{- d m} \exp\!\left( - \left\| \gu_{n, m, \ga} \right\| \right).
\]
The integral appearing in the second factor equals
\[
n \exp\!\left( - \left\| \gov_{n, m, \ga} \right\| \right) \P\!\left( \Gam(d, 1) \leq \bon - \left\| \gov_{n, m, \ga} \right\| \right).
\]
Further,
\[
\P\!\left( \Gam(d, 1) \leq \bon - \left\| \gov_{n, m, \ga} \right\| \right)
\geq \P\!\left( \Gam(d, 1) \leq \bon - b_n - d 2^{-m} \right),
\]
so that, as $m \to \infty$, uniformly in $\ga \in I_{n, m}$ we have
\begin{equation}
\label{P7}
\P\!\left( \Gam(d, 1) \leq \bon - \left\| \gov_{n, m, \ga} \right\| \right)
\geq (1 + o(1)) \P(\Gam(d, 1) < \bon - b_n);
\end{equation}
and, by \eqref{LB1}--\eqref{Theta}, the probability on the right in~\eqref{P7}, call it $\theta_n$, satisfies
\[
\theta_n = 1 - O\!\left( (\rL n)^{- 2 (d - 1)} (\rL_2 n)^{d - 2} \right) = 1 - o(1)\mbox{\ uniformly for $|a| \leq a_n$}
\]
as $n \to \infty$.

It is then clear from a Riemann-sum approximation that
\begin{align}
\lefteqn{\limsup_{m \to \infty} \fb_1(n, m)} \nonumber \\ 
&\leq n^2 \int_{\x \in A_n} \int_{\y \in A_n:\,\| \x \vee \y \| < \bon} e^{- (\| \x \| + \| \y \|)}
\exp\!\left[- (1 - \theta_n) n e^{- (\| \x \| + \| \y \|)} \right] \dd \y \dd \x. \label{P8}
\end{align}
In~\eqref{P8}, the integrand factor
\[
\exp\!\left[\theta_n n e^{- (\| \x \| + \| \y \|)} \right]
\]
is bounded above by
\[
\exp\!\left[\theta_n n e^{- 2 \bun} \right]
= \exp\!\left[O\!\left( n^{-1} (\rL_2 n)^{(1 + o(1)) 2} \right) \right] = 1 + o(1),
\]
so
\begin{equation}
\label{fb1nmJn}
\limsup_{m \to \infty} \fb_1(n, m) \leq (1 + o(1)) J_n\mbox{\ uniformly for $|a| \leq a_n$ as $n \to \infty$},
\end{equation}
with
\begin{equation}
\label{Jn}
J_n := 
n^2 \int_{\x \in A_n} \int_{\y \in A_n:\,\| \x \vee \y \| < \bon} e^{- (\| \x \| + \| \y \|)}
\exp\!\left[- n e^{- (\| \x \| + \| \y \|)} \right] \dd \y \dd \x. 
\end{equation}
So it remains to prove that
\begin{equation}
\label{JnO}
J_n = O((\rL n)^{- (d - 1)} (\rL_2 n)^d (\rL_3 n)^{-2})\mbox{\ uniformly for $|a| \leq a_n$}.
\end{equation} 

To assess the magnitude of $J_n$, we begin by making the change of variables from $\x = (x_1, \ldots, x_d)$ to 
$(\uu, z) \equiv (u_1, \ldots, u_{d - 1}, z)$ using
\[
x_j = u_j z\mbox{\ for $j \in [d - 1]$ and\ \ }x_d = [1 - (u_1 + \cdots +u_{d - 1})] z
\]
and a similar change of variables from $\y$ to $(\vv, w)$ to find
\[
J_n = \int\!z^{d - 1} n e^{-z} \exp(- n e^{-z}) w^{d - 1} n e^{-w} \exp(- n e^{-w}) \dd \vv \dd \uu \dd w \dd z,
\]
where the integral is over $(z, w, \uu, \vv)$ satisfying the constraints
\begin{align*}
&{} \bun < z < b_n,\ \ \bun < w < b_n, \\
&{} \uu \succ {\bf 0},\ \ \| \uu \| < 1,\ \ \vv \succ {\bf 0},\ \ \| \vv \| < 1, \\
&{} \sum_{j = 1}^d [(u_j z) \vee (v_j w)] < \bon,
\end{align*}
with
\[
u_d := 1 - (u_1 + \cdots + u_{d - 1})\mbox{\ \ and\ \ }v_d := 1 - (v_1 + \cdots + v_{d - 1}).
\]
Now make the change of variables from $(z, w)$ to $(s, t)$ using $s = z - \rL n$ and $t = w - \rL n$ to obtain
\begin{equation}
\label{Jnstuv}
J_n = \int\!(\rL n + s)^{d - 1} e^{-s} \exp(- e^{-s}) (\rL n + t)^{d - 1} e^{-t} \exp(- e^{-t}) \dd \vv \dd \uu \dd t \dd s,
\end{equation}
where now the integral is over $(s, t, \uu, \vv)$ satisfying the constraints
\begin{align}
&{} \bun - \rL n < s < b_n - \rL n,\ \ \bun - \rL n < t < b_n - \rL n, \label{st} \\
&{} \uu \succ {\bf 0},\ \ \| \uu \| < 1,\ \ \vv \succ {\bf 0},\ \ \| \vv \| < 1, \label{uv} \\
&{} \sum_{j = 1}^d [(u_j (\rL n + s)) \vee (v_j (\rL n + t))] < \bon \label{vee}.
\end{align}
From~\eqref{Jnstuv}--\eqref{vee}, we have (for large~$n$) the bounds
\begin{align}
J_n 
&\leq b_n^{2 (d - 1)} \int\!e^{-s} \exp(- e^{-s}) e^{-t} \exp(- e^{-t}) \dd \vv \dd \uu \dd t \dd s \label{Jnbnstuv} \\
&= b_n^{2 (d - 1)} [(\rL n)^{- c_n} - (\rL n)^{- \go_n}]^2 \int\!\dd \vv \dd \uu \label{Jnuv1} \\
&\leq (\rL n)^{2 (d - 1 - c_n)} \int\!\dd \vv \dd \uu 
= e^{2 (d - 1) a} \int\!\dd \vv \dd \uu \label{Jnuv2}, 
\end{align}
with the integral in~\eqref{Jnbnstuv} over $(s, t, \uu, \vv)$ subject to the constraints \eqref{st}--\eqref{uv} and the constraint
\begin{equation}
\label{uveev}
\sum_{j = 1}^d (u_j \vee v_j) < 1 + \eps_n := \frac{\bon}{b_n},
\end{equation}
and the integrals in~\eqref{Jnuv1}--\eqref{Jnuv2} over $(\uu, \vv)$ subject to the constraints \eqref{uv} and~\eqref{uveev}.
For $a \leq a_n$, the factor $e^{2 (d - 1) a}$ multiplying the integral on the right in~\eqref{Jnuv2} is bounded by 
$O((\rL_2 n) (\rL_3 n)^{-2})$.
So now it remains to show that the integral $q_n = \int\!\dd \vv \dd \uu$ appearing thrice in~\eqref{Jnuv1}--\eqref{Jnuv2} satisfies
\begin{equation}
\label{qnO}
q_n = O\!\left( \left( \frac{\rL_2 n}{\rL n} \right)^{d - 1} \right)\mbox{\ uniformly for $|a| \leq a_n$}.
\end{equation}

To prove~\eqref{qnO}, let $\U = (U_1, \ldots, U_d)$ and $\V = (V_1, \ldots, V_d)$ be independent random vectors each distributed Dirichlet$(1, \ldots, 1)$, \ie,\ uniformly distributed on $\cS_{d - 1}$.  Then, letting $x^+ = \max\{x, 0\}$ denote the positive part of $x \in \R$, a bit of thought (noting especially that almost surely $\U, \V \succ 0$ and $\| \U \| = 1 = \| \V \|$) establishes the inequality in
\[
q_n 
= \P\!\left( \sum_{j = 1}^d (U_j \vee V_j) < 1 + \eps_n \right)
= \P\!\left( \sum_{j = 1}^d (V_j - U_j)^+ < \eps_n \right)
\leq \sum_{k = 1}^d \binom{d}{k} q_{n, k}
\]
with
\[
q_{n, k} := \P\!\left( 0 < V_j - U_j < \eps_n\mbox{\ for $j \in [k]$ and\ }0 < U_j - V_j < \eps_n\mbox{\ for $j \in [d] / [k]$} \right).
\]
Writing $q_{n, k}$ naturally as a $(2 (d - 1))$-dimensional integral with variables $u_1, \ldots, u_{d - 1}, v_1, \ldots, v_{d - 1}$ and then making the change of variables
\[
v_j \mapsto x_j = v_j - u_j\mbox{\ for $j \in [k]$ and\ }u_j \mapsto x_j = u_j - v_j\mbox{\ for $j \in [d - 1] \setminus [k]$},
\]
we find that
\begin{equation}
\label{qnkint}
q_{n, k} = \int\!\dd x_1 \cdots \ddx x_{d - 1} \dd u_1 \cdots \ddx u_k \dd v_{k + 1} \cdots \ddx v_{d - 1},
\end{equation}
where the integral in~\eqref{qnkint} is over variables $(u_1, \ldots, u_k)$ and $(v_{k + 1}, \ldots, v_{d - 1})$ and 
$(x_1, \ldots, x_{d - 1})$ satisfying the following relations:
\begin{align*}
&{} 0 < x_j < \eps_n\mbox{\ for $j \in [d - 1]$}, \\
&{} 0 < u_j < 1 - x_j\mbox{\ for $j \in [k]$}, \\
&{} 0 < v_j < 1 - x_j\mbox{\ for $j \in [d - 1] \setminus [k]$}, \\
&{} \sum_{j = 1}^k u_j + \sum_{j = k + 1}^{d - 1} (v_j + x_j) < 1, \\
&{} \sum_{j = 1}^k (u_j + x_j) + \sum_{j = k + 1}^{d - 1} v_j < 1.
\end{align*}
By relaxing these constraints to
\begin{align*}
&{} 0 < x_j < \eps_n\mbox{\ for $j \in [d - 1]$}, \\
&{} 0 < u_j < 1\mbox{\ for $j \in [k]$}, \\
&{} 0 < v_j < 1\mbox{\ for $j \in [d - 1] \setminus [k]$}, \\
&{} \sum_{j = 1}^k u_j + \sum_{j = k + 1}^{d - 1} v_j < 1,
\end{align*}
we find
\[
q_{n, k} \leq \eps_n^{d - 1}.
\]
Since, by the definition of~$\eps_n$ at~\eqref{uveev},
\[
\eps_n = \frac{\bon - b_n}{b_n} \sim \frac{2 (d - 1) \rL_2 n}{\rL n}\mbox{\ uniformly for $|a| \leq a_n$},
\]
at last we conclude (again uniformly for $|a| \leq a_n$)
\[
q_n \leq (2^d - 2) \eps_n^{d - 1} \sim [2 (d - 1)]^{d - 1} (2^d - 2) \left( \frac{\rL_2 n}{\rL n} \right)^{d - 1}, 
\]
establishing~\eqref{qnO} and thereby completing the proof of the lemma.
\end{proof}

\section{Distribution of the smallest maximum}
\label{S:location}

\begin{remark}
\label{R:independence}
(a)~As already discussed in \refS{S:intro}, if $\Y_n$ denotes a ``generic maximum'' at epoch~$n$, then $\|\Y_n\|$ and 
$\U_n := \|\Y_n\|^{-1} \Y_n$ are independent, and $\U_n$ is uniformly distributed on $\cS_{d - 1}$.

(b)~Likewise, if the \emph{largest maximum} $\bm{\gl}_n$ is defined as the (almost surely unique) maximum at epoch~$n$ with the largest $\ell^1$-norm, then it easy to check that $\|\bm{\gl}_n\|$ [$ \equiv F^+_n$ in the notation of \cite{Fillboundary_2020} and \cite{FNS_2024} and~\eqref{F+n}] and $\V_n := \|\bm{\gl}_n\|^{-1} \bm{\gl}_n$ are independent, and $\V_n$ is uniformly distributed on 
$\cS_{d - 1}$.

(c)~There is no such simple result (for finite~$n$) concerning the \emph{smallest maximum} $\bm{\gs}_n$, defined as the (almost surely unique) maximum at epoch~$n$ with the smallest $\ell^1$-norm (equal to $\vp_n$).  Indeed, using the fact that $\| \X \|$ is distributed $\Gam(d, 1)$, it is not hard to see that $\bm{\gs}_2$ has exactly the density
\begin{align*}
\P(\bm{\gs}_2 \in \ddx \s)
&= 2\,\P(\X \in \ddx \s)\,\big[\P(\X \prec \s) + \P(\| \X \| > \| \s \|) - \P(\X \succ \s)\big] \\ 
&= 2 e^{-\| \s \|} \left[ \prod_{j = 1}^d (1 - e^{- s_j}) + e^{-\| \s \|} \sum_{j = 1}^{d - 1} \frac{\| \s \|^j}{j!} \right] \dd \s
\end{align*}
for $\s \in (0, \infty)^d$.  The continuous density here is not a function of $\| \s \|$, and so $\vp_2 = \|\bm{\gs}_2\|$ and 
$\|\bm{\gs}_2\|^{-1} \bm{\gs}_2$ are not independent.
\end{remark}

Notwithstanding \refR{R:independence}(c), we make the following conjecture.

\begin{conj}
\label{Conj:asy indep}
With $\bm{\gs}_n$ defined to be the (almost surely unique) maximum of $\{ \X^{(1)}, \ldots, \X^{(n)} \}$ with minimum 
$\ell^1$-norm, $\vp_n$ and $\vp_n^{-1} \bm{\gs}_n$ are asymptotically independent, and $\vp_n^{-1} \bm{\gs}_n$ converges in distribution to the uniform distribution on $\cS_{d - 1}$. 
\end{conj}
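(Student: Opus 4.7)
My plan is to strengthen \refT{T:main Poisson} to a joint Poisson approximation that resolves both the norm and the direction of maxima near the lower boundary, and then to extract \refConj{Conj:asy indep} via continuous mapping. The starting observation is that the Poissonized intensity $\muon(\ddx\x) = n e^{-\|\x\|} {\bf 1}(\bun < \|\x\| \leq \bon)\,\ddx\x$ factorizes under the polar-like change of variables $\x = r \uu$ with $r = \|\x\|$ and $\uu = r^{-1}\x \in \cS_{d - 1}$ as $n e^{-r} r^{d - 1}\,\ddx r \cdot \ddx\sigma(\uu)$, where $\sigma$ is proportional to the uniform measure $\mu$ on the simplex. Moreover, the computation at~\eqref{PPionO+x=0 2} establishes that $\P(\Pion(O^+_{\x}) = 0)$ --- the probability that a Poisson point at $\x \in A_n$ is a maximum --- depends on $\x$ only through $\|\x\|$. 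Consequently, the expected number of Poissonized maxima in a directional wedge $W_U := \{\x \succ {\bf 0}: \x/\|\x\| \in U\}$ is exactly $\mu(U)$ times the unrestricted mean computed in \refL{L:two means}.

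With this in hand, for a Borel partition $U_1, \ldots, U_k$ of $\cS_{d - 1}$ I would define the directional counts $\rho_n^{(i)}(b) := \#\{\text{maxima } \rr: \|\rr\| \leq b,\ \rr/\|\rr\| \in U_i\}$, and re-run the pipeline of Sections~\ref{S:1}--\ref{S:5} with each relevant count restricted to its wedge. The boundary replacement, Poissonization, and conditioning steps use only norm information and transfer verbatim. The Arratia--Chen--Stein discretization is carried out with directionally tagged indicators $Z_{n, m, \ga}^{(i)} := 1 \wedge \Mon(A_n \cap G_{n, m, \ga} \cap W_{U_i})$; the dependency structure $B_{n, m, \ga}$ of~\eqref{Bnmga} involves only pairwise norm comparisons and is unchanged by directional tagging, so the $\fb_1$-bound of \refL{L:limsup fb1} transfers with an exact $\mu(U_i)\mu(U_j)$ factor pulled from the pair-integral (where the $(\uu, \vv)$-region is intersected with $U_i \times U_j$). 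This yields joint Poisson convergence
\[
\left(\rho_n^{(1)}(b_n(a)), \ldots, \rho_n^{(k)}(b_n(a))\right) \law \left(\Po(\gl(a)\mu(U_1)), \ldots, \Po(\gl(a)\mu(U_k))\right)
\]
with independent coordinates, uniformly for $|a| \leq a_n$.

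Equivalently, the point process of (centered, $(\rL_2 n)$-rescaled norm, direction) pairs of maxima in $A_n$ converges weakly, in the vague topology on locally finite point measures on $\R \times \cS_{d - 1}$, to a Poisson point process with product intensity $(d - 1) e^{(d - 1) a} / (d - 1)!\,\ddx a \times \ddx\mu(\uu)$. The limit a.s.\ has a unique point of minimum first coordinate, so the functional ``location of the smallest-norm point'' is a.s.\ continuous at the limit. Combining continuous mapping with the bounds $\P(\vp_n > \bzn(a_n)) = o(1)$ (from \refP{P:BE again 2}) and $\P(\vp_n \leq \bun) \leq \E\rho_n(\bun) = o(1)$ (from \refS{S:1}) --- which together imply that $\bm{\gs}_n \in A_n$ with probability $1 - o(1)$, so that $\vp_n^{-1}\bm{\gs}_n$ agrees with the direction of the minimum-norm point of the Poisson limit --- we obtain joint convergence of $(\vp_n^{\circ}, \vp_n^{-1}\bm{\gs}_n)$ to $(-G, \U)$, with $G$ the Gumbel random variable of \refT{T:main} and $\U$ uniform on $\cS_{d - 1}$ and independent of $G$. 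This establishes \refConj{Conj:asy indep}.

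The main technical obstacle is executing the joint Arratia--Chen--Stein analysis cleanly, in particular verifying that the pair-integral in the proof of \refL{L:limsup fb1} separates into a norm factor and a directional factor $\mu(U_i)\mu(U_j)$ to leading order. The ingredients are already present: after the polar-like change of variables leading to~\eqref{Jnstuv}, the integrand is a function of the norm variables $(s, t)$ alone, and the coupling of $(\uu, \vv)$ with $(s, t)$ enters only through the constraint~\eqref{vee}, which relaxes to the purely directional constraint~\eqref{uveev} with $\eps_n \to 0$; intersecting with $U_i \times U_j$ then contributes the desired $\mu(U_i)\mu(U_j)$ factor to leading order, and the remaining error is of the same order as in \refL{L:limsup fb1}.
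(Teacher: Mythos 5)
The statement you are trying to prove is a \emph{conjecture} in the paper, not a theorem: the author deliberately leaves it open, offering only a heuristic in \refS{S:location} and remarking that the required point-process approximation ``can perhaps be established rigorously using the techniques of \cite[Chapter~10]{Barbour_1992}, which include \cite[Theorem~2]{Arratia_1989}.'' Your sketch is in fact essentially the route the paper itself anticipates --- upgrade the count-level Poisson approximation of \refT{T:main Poisson} to a point-process approximation, then use the factorization of the intensity (and hence of the minimum-norm point's coordinates) under the polar decomposition. Your observations that $\P\bigl(\Pion(O^+_{\x}) = 0\bigr)$ depends on $\x$ only through $\|\x\|$, and that the limiting intensity on $\R \times \cS_{d-1}$ is the product $(d-1)e^{(d-1)a}/(d-1)!\,\ddx a \times \ddx\mu(\uu)$, are both correct and are the right foundation.

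The genuine gap is at the step where you write ``This yields joint Poisson convergence \dots\ with independent coordinates.'' Re-running the pipeline of Sections~\ref{S:1}--\ref{S:5} wedge-by-wedge only gives, for each fixed Borel $U \subseteq \cS_{d-1}$, the marginal convergence $\rho_n^{(U)}(b_n(a)) \law \Po(\gl(a)\mu(U))$. A family of marginal Poisson limits does \emph{not} imply asymptotic independence across disjoint wedges: knowing $X_n \to \Po(\gl_1)$, $Y_n \to \Po(\gl_2)$, and $X_n + Y_n \to \Po(\gl_1+\gl_2)$ does not force the joint limit to factor. What you need is the process-level statement --- \cite[Theorem~2]{Arratia_1989} or the point-process approximation theorems of \cite[Chapter~10]{Barbour_1992} --- which bounds the total-variation distance between the entire indicator process $(Z_{n,m,\ga})_\ga$ and an independent Bernoulli (or Poisson) process. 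That theorem uses the same $\fb_1, \fb_2, \fb_3$, so your claim that the dependency structure $B_{n,m,\ga}$ is unaffected by directional tagging is correct as far as it goes; but notice that the paper's proof of~\eqref{TV55} never bounds $\fb_2$ directly. Instead it exploits the count-specific identity $\fb_1+\fb_2 = \E Z_{n,m}^2 - \E\Po(\glnm)^2 + 2\fb_1$ (see~\eqref{TVfb}) and then invokes the second-moment bound \refL{L:VarNon}. That trick sidesteps $\fb_2$ only for the count version; a rigorous process-level proof would need either a direct bound on $\fb_2$ (the sum of $p_{n,m,\ga,\gam}$ over dependent pairs) or a process-level analogue of the moment identity. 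Your sketch does not address this, and it is precisely here that the technical obstruction the paper alludes to sits. Until that is resolved, the vague-convergence and continuous-mapping steps that follow --- which are themselves fine in outline, given the $\P(\vp_n \leq \bun) = o(1)$ and $\P(\vp_n > \bzn(a_n)) = o(1)$ estimates you correctly import --- cannot be launched.
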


In light of the heuristic argument spanning \eqref{Enx}--\eqref{parameter} in \refS{S:intro}, it is quite believable that, for each fixed $a \in \R$ and for large~$n$, the point process of vectors $\x \in (0, \infty)^d$ with $\bun < \| \x \| \leq \bzn$ that are maxima of $\{ \X^{(1)}, \ldots, \X^{(n)} \}$ at epoch~$n$ is, in some suitable sense, well approximated by a Poisson point process, call it $\cP_n$, with intensity measure
\[
\nu_n(\ddx \x) := n e^{- \| \x \|} \exp(- n e^{- \| \x \|}), \quad \bun < \| \x \| \leq \bzn.
\] 
Let $H_n$ denote the event that $\cP_n((0, \infty)^d) \geq 1$, and let $\U$ be independent of $\cP_n$ and uniformly distributed over $\cS_{d - 1}$.
Over the event $H_n$, let $\hgs_n$ denote the (almost surely unique) point of $\cP_n$ with minimum $\ell^1$-norm, say $\hvp_n$; and over $H_n^c$, let $\hvp_n := 1$ and $\hgs_n := \U$.  Then, for Borel $A \subseteq \cS_{d - 1}$, we would expect to have
\begin{align}
\P(\vp_n \leq \bzn,\,\vp_n^{-1} \bm{\gs}_n \in A)
&\approx \P(H_n \cap \{\hvp_n^{-1} \hgs_n \in A\}) \label{intersection} \\
&= \P(H_n)\,\P(\U \in A) \label{prod} 
\end{align}
[indeed, we know that the equality of the right side of~\eqref{intersection} and the expression in~\eqref{prod} is true exactly!]\ and, in particular,
\begin{equation}
\label{PHn}
\P(\vp_n \leq \bzn) \approx \P(H_n).
\end{equation}
Combining \eqref{prod}--\eqref{PHn} would then give \refConj{Conj:asy indep}.  Perhaps the point-process approximation by $\cP_n$ we have surmised here can be established rigorously using the techniques of 
\cite[Chapter~10]{Barbour_1992}, which include \cite[Theorem~2]{Arratia_1989}.

\appendix


\section{Collected results from Fill et al.~\cite{FNS_2024}, and closely related results}
\label{A:FNS}

For the reader's convenience, this appendix collects results from~\cite{FNS_2024}, and closely related results, used in the present work.
\medskip

Our first lemma is used in the proof of \refP{P:BE again 2}.

\begin{lemma}[\cite{FNS_2024}, Propositions~2.3 and~3.2]
\label{L:moment method}
If $\Bigl( \tb_n \Bigr)$ is any sequence satisfying
\begin{equation}
\label{bndefhat}
\tb_n \coloneq \rL n - \rL_3 n - \rL \tc_n\mbox{\rm \ with $\tc_n > 0$ and $\tc_n = \Theta(1)$},
\end{equation}
then as $n \to \infty$ we have
\begin{align*}
\P\!\left( \vp_n \leq \tb_n \right) 
&\leq \E \rho_n\!\left( \tb_n \right) 
= (1 + o(1))\,\frac{1}{(d - 1)!} (\rL n)^{d - 1 - \tc_n}
\end{align*}
and
\begin{IEEEeqnarray*}{+rCl+x*}
\P\!\left( \vp_n \geq \tb_n \right) 
&\leq& (1 + o(1))\,(d - 1)! (\rL n)^{ - (d - 1 - \tc_n)}. &\qed
\end{IEEEeqnarray*}
\end{lemma}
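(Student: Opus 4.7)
The two bounds are obtained by applying the first- and second-moment methods, respectively, to the count $\rho_n(\tb_n)$ of maxima with $\ell^1$-norm at most $\tb_n$. The upper bound is immediate from the ``switching relation'' $\{\vp_n \leq \tb_n\} = \{\rho_n(\tb_n) \geq 1\}$ combined with Markov's inequality, so the real content of the first assertion lies in evaluating the expected value. Since the $d$ coordinates of each $\X^{(i)}$ are \iid\ Exponential$(1)$, I would write
\[
\E \rho_n(\tb_n) = n \int_{\x \succ {\bf 0},\ \|\x\| \leq \tb_n} e^{-\|\x\|} (1 - e^{-\|\x\|})^{n-1} \dd \x,
\]
then reduce to a single integral via the familiar Dirichlet/radial decomposition to get
\[
\E \rho_n(\tb_n) = \frac{n}{(d-1)!} \int_0^{\tb_n} z^{d-1} e^{-z} (1 - e^{-z})^{n-1} \dd z,
\]
and perform the substitution $u = (n-1)e^{-z}$. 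Using $(1 - u/(n-1))^{n-1} \to e^{-u}$ and checking that the integral is dominated by values of $u$ of order $\tc_n \rL_2 n$ (where $\ln(n-1) - \ln u \sim \rL n$), one obtains $\E \rho_n(\tb_n) \sim (\rL n)^{d-1-\tc_n}/(d-1)!$, which is the claimed asymptotic.

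For the second assertion I would use the Paley--Zygmund/second-moment inequality $\P(\rho_n(\tb_n) = 0) \leq \Var \rho_n(\tb_n)/(\E \rho_n(\tb_n))^2$, so that it suffices to prove $\Var \rho_n(\tb_n) = (1 + o(1)) \E \rho_n(\tb_n)$; combined with the first-moment calculation, this yields the bound $(1+o(1))(d-1)!(\rL n)^{-(d-1-\tc_n)}$. The variance is computed via the second factorial moment
\[
\E[\rho_n(\tb_n)(\rho_n(\tb_n)-1)] = n(n-1) \int\!\int e^{-\|\x\|-\|\y\|} \left(1 - e^{-\|\x\|} - e^{-\|\y\|} + e^{-\|\x \vee \y\|}\right)^{n-2} \dd \x \dd \y,
\]
where the double integral ranges over incomparable pairs $\x, \y \succ {\bf 0}$ with $\|\x\|, \|\y\| \leq \tb_n$ (the inclusion--exclusion term $e^{-\|\x \vee \y\|}$ accounts for the overlap of the two domination regions).

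The hard part is showing that this second factorial moment is asymptotic to $(\E \rho_n(\tb_n))^2$, i.e.\ that the pair dependence is asymptotically negligible. The plan is to approximate
\[
\left(1 - e^{-\|\x\|} - e^{-\|\y\|} + e^{-\|\x \vee \y\|}\right)^{n-2} \approx \exp\!\left[-(n-2)(e^{-\|\x\|} + e^{-\|\y\|})\right] \exp\!\left[(n-2) e^{-\|\x \vee \y\|}\right],
\]
and to argue that the correction factor $\exp[(n-2) e^{-\|\x \vee \y\|}]$ is close to~$1$ when integrated against the rest of the integrand. The point is that incomparability forces $\|\x \vee \y\|$ to exceed $\max(\|\x\|, \|\y\|)$ by an appreciable amount, so $(n-2) e^{-\|\x \vee \y\|}$ is small in the effective integration region. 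Once this correction is controlled, the double integral approximately factors into two copies of the single integral computed for the first moment, giving $(\E \rho_n(\tb_n))^2$ at leading order. The main obstacle is the careful splitting of the incomparable-pair region and the quantitative control of the near-diagonal contribution, where $\|\x \vee \y\|$ can be close to $\max(\|\x\|, \|\y\|)$; here one uses the independence of the Exponential coordinates and a Dirichlet-style decomposition of each of $\x$ and $\y$ to reduce the estimate to a manageable integral over the simplex.
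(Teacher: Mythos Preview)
The paper gives no proof of this lemma: it is quoted in Appendix~A directly from \cite{FNS_2024} (Propositions~2.3 and~3.2), with a terminal tombstone in the statement and no proof environment. Your outline---first-moment method for the upper bound on $\P(\vp_n \leq \tb_n)$ via the exact integral for $\E\rho_n(\tb_n)$, and second-moment method via the factorial-moment integral over incomparable pairs with the inclusion--exclusion term $e^{-\|\x\vee\y\|}$ in the joint non-domination probability---is exactly the approach of~\cite{FNS_2024}; this can also be read off from the present paper's proof of \refL{L:VarNon}, which explicitly follows ``the approach to $\Var\rho_n(b_n)$ in~\cite{FNS_2024}'' and carries out the same isolation and control of the $\exp[n e^{-\|\x\vee\y\|}]$ correction. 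One small imprecision worth flagging: to obtain $\Var\rho_n = (1+o(1))\,\E\rho_n$ you need $\E[\rho_n(\rho_n-1)] - (\E\rho_n)^2 = o(\E\rho_n)$, which in the regime $\E\rho_n\to\infty$ (i.e., $\tc_n$ bounded below $d-1$) is strictly stronger than your stated intermediate target ``the second factorial moment is asymptotic to $(\E\rho_n)^2$''; the estimates in~\cite{FNS_2024} do deliver this stronger bound, so the plan goes through once the variance calculation is actually executed.
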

\smallskip

Define
\begin{equation}
\label{Jjdef}
J_j(x) \coloneq \int_x^{\infty} (\rL z)^j e^{-z} \dd z
\end{equation}
and note \cite[(A.3)]{FNS_2024} that
\begin{equation}
\label{Jjasy}
J_j(x) \sim (\rL x)^j e^{-x}\mbox{\ as $x \to \infty$}. 
\end{equation}
The next lemma is used in the proof of~\eqref{TV11}.
\begin{lemma}
\label{L:Erhonbun}
We have
\[
\E \rho_n(\bun) = O((\rL n)^{d - 1 - \go_n}) = o((\rL n)^{- (d - 1)}).
\]
\end{lemma}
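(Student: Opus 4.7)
The plan is to start from the standard integral representation
\[
\E \rho_n(\bun) = \frac{n}{(d - 1)!} \int_0^{\bun} y^{d - 1} e^{-y} (1 - e^{-y})^{n - 1} \dd y,
\]
which follows by conditioning on an observation equalling $\x$, noting that the conditional probability that $\x$ is a maximum is $(1 - e^{-\|\x\|})^{n-1}$ (since $\P(\X \succ \x) = e^{-\|\x\|}$ under Model~E), and then converting from integration over $\x \succ {\bf 0}$ with $\|\x\| \leq \bun$ to integration over $y = \|\x\|$ using the fact that the $(d-1)$-dimensional ``surface area'' of the scaled simplex contributes the factor $y^{d-1}/(d-1)!$.

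Next, I would apply the standard bound $(1 - e^{-y})^{n-1} \leq \exp(-(n-1) e^{-y})$ and make the substitution $z = (n - 1) e^{-y}$ (so that $y = \ln(n-1) - \ln z$ and $e^{-y} \dd y = -\dd z/(n-1)$) to obtain
\[
\E \rho_n(\bun) \leq \frac{n}{(n-1)(d - 1)!} \int_{(n - 1) e^{- \bun}}^{n - 1} [\ln(n-1) - \ln z]^{d - 1} e^{-z} \dd z,
\]
which is at most $\frac{n}{(n-1)(d-1)!} J_{d-1}((n-1) e^{-\bun})$ after extending the upper limit to $\infty$ and using $\ln(n-1) - \ln z \leq \ln(n-1)$ on the range $z \geq 1$ (splitting off the negligible piece with $z \leq 1$, which contributes $O((\rL n)^{d-1}/n)$). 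Here $J_{d-1}$ is the function defined at~\eqref{Jjdef}.

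Now I would apply the asymptotic~\eqref{Jjasy} together with the computation
\[
(n - 1) e^{-\bun} = \frac{n - 1}{n} (\rL_2 n)\,\go_n = (\rL_2 n)\,\go_n + o(1),
\]
which uses the definition~\eqref{bun} of $\bun$. This yields
\[
\E \rho_n(\bun) \leq (1 + o(1))\,\frac{(\rL n)^{d - 1}}{(d - 1)!}\,\exp[-(\rL_2 n)\,\go_n]
= (1 + o(1))\,\frac{(\rL n)^{d - 1 - \go_n}}{(d - 1)!},
\]
which is the first assertion. The second equality in the lemma, that this is $o((\rL n)^{-(d-1)})$, is immediate from $\go_n \to \infty$, since it just requires $\go_n > 2(d - 1)$ for all sufficiently large~$n$. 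The only mildly delicate point is ensuring that the growth condition $\rL \go_n = o(\rL_3 n)$ — which is not strictly needed for this upper bound — is consistent with what we have written; here it plays no role beyond guaranteeing that $(n - 1) e^{-\bun}$ tends to infinity slowly enough for the asymptotic~\eqref{Jjasy} to apply cleanly. I do not anticipate any serious obstacle; the argument is essentially a first-moment computation combined with the elementary bound on $(1 - e^{-y})^{n-1}$.
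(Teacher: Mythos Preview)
Your argument is correct in substance, but there is a labeling slip: after bounding $\ln(n-1) - \ln z \leq \ln(n-1)$ you obtain $[\ln(n-1)]^{d-1}\exp[-(n-1)e^{-\bun}]$, \emph{not} $J_{d-1}((n-1)e^{-\bun})$; indeed, your next display uses the former (with the factor $(\rL n)^{d-1}$), whereas the asymptotic~\eqref{Jjasy} applied to $J_{d-1}$ would produce the much smaller factor $(\rL_3 n + \rL\go_n)^{d-1}$. Also, the aside about ``splitting off the negligible piece with $z \leq 1$'' is moot: the lower limit $(n-1)e^{-\bun} = (\rL_2 n)\,\go_n + o(1) \to \infty$, so the region $z \leq 1$ never arises.

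Your route is otherwise a mild simplification of the paper's. The paper expands $(\rL n - \rL z)^{d-1}$ by the binomial theorem into $\sum_{j=0}^{d-1}(-1)^j\binom{d-1}{j}(\rL n)^{d-1-j}J_j(\cdot)$ and shows the $j=0$ term dominates; this is where the growth condition $\rL\go_n = o(\rL_3 n)$ is actually used (to ensure $(\rL_3 n)^j \ll (\rL n)^j$ after the $J_j$ asymptotic). Your direct bound $[\ln(n-1)-\ln z]^{d-1}\leq (\rL n)^{d-1}$ sidesteps that expansion entirely and needs only $\go_n\to\infty$. The trade-off is that the paper's decomposition is what would deliver the sharper asymptotic mentioned in \refR{R:Erhonbun}, while your argument gives only the upper bound, which is all the lemma claims.
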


\begin{proof}
To bound $\E \rho_n(\bun)$, follow along the calculations in the proof of Lemma 2.1 in \cite[Appendix~A]{FNS_2024}.  One finds (compare \cite[(A.6)]{FNS_2024})
\begin{equation}
\label{Erhonbun bound}
\E \rho_n(\bun)
\leq \frac{1 + (n - 1)^{-1}}{(d - 1)!} \sum_{j = 0}^{d - 1} (-1)^j \binom{d - 1}{j} (\rL n)^{d - 1 - j}
[J_j((n - 1) e^{- \bun}) - J_j(n)],
\end{equation}
with, by~\eqref{Jjasy}, $J_j(n) \sim (\rL n)^j e^{-n}$ and, using the growth assumption $\rL \go_n = o(\rL_3 n)$ at~\eqref{simp1} and~\eqref{simp2},
\begin{align}
J_j((n - 1) e^{- \bun})
&\sim [\rL(n - 1) - \bun]^j \exp[- (n - 1) e^{- \bun}] \nonumber \\
&\sim [\rL n - \bun]^j \exp[- (n - 1) e^{- \bun}] \nonumber \\
&= [\rL_3 n + \rL \go_n]^j \exp[- (n - 1) n^{-1} (\rL_2 n) \go_n] \nonumber \\
&\sim (\rL_3 n)^j \exp[- (n - 1) n^{-1} (\rL_2 n) \go_n] \label{simp1} \\
&\sim (\rL_3 n)^j (\rL n)^{- \go_n} \label{simp2}.
\end{align}
Thus the $j = 0$ term in~\eqref{Erhonbun bound} predominates and yields the desired result.
\end{proof}

The next lemma is used in the proof of \refP{P:TV33}.

\begin{lemma} 
\label{L:PEn}
Let $E_n$ be the event that a Poisson process with intensity measure
\[
\mu_n(\ddx \x) := n e^{- \| \x \|} {\bf 1}(\x \succ {\bf 0}) \dd \x
\]
has a maximum with $\ell^1$-norm larger than $\bon = \rL n + 2 (d - 1) \rL_2 n$.  Then
\[
\P(E_n) = O((\rL n)^{ - (d - 1)}).
\]
\end{lemma}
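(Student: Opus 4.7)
My plan is to bound $\P(E_n)$ by the expected number of Poisson points that are maxima with $\ell^1$-norm exceeding $\bon$, evaluate that expectation via the Slivnyak--Mecke formula, and then crudely estimate the resulting one-dimensional integral.

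\medskip

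\noindent \textbf{Step 1 (reduction to a first-moment bound).} Let $M_n$ denote the number of points of the Poisson process $\Pi$ with intensity~$\mu_n$ that are maxima of $\Pi$ and satisfy $\| \x \| > \bon$. Since $E_n = \{M_n \geq 1\}$, Markov's inequality gives $\P(E_n) \leq \E M_n$.

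\medskip

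\noindent \textbf{Step 2 (Slivnyak--Mecke).} For a Poisson process, adding a point does not change whether that point is a maximum, so
\[
\E M_n = \int_{\x \succ {\bf 0},\,\| \x \| > \bon} \P\!\left( \Pi(O^+_\x) = 0 \right) \mu_n(\ddx \x).
\]
Using that $\mu_n(O^+_\x) = n \int_{\y \succ \x} e^{-\|\y\|}\,\ddx \y = n e^{-\|\x\|}$ (by independence of Exp$(1)$'s and translation), the Poisson void probability gives $\P(\Pi(O^+_\x) = 0) = \exp[-n e^{-\|\x\|}]$, hence
\[
\E M_n = \int_{\x \succ {\bf 0},\,\| \x \| > \bon} n e^{-\|\x\|} \exp[-n e^{-\|\x\|}]\,\ddx \x.
\]

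\medskip

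\noindent \textbf{Step 3 (reduction to a radial integral).} Via the standard simplex slicing $\x = r \uu$ with $\uu \in \cS_{d-1}$ and $r = \| \x \|$ (Jacobian $r^{d-1}$, and the uniform volume of $\cS_{d-1}$ in the induced coordinates being $1/(d-1)!$), we obtain
\[
\E M_n = \int_{\bon}^{\infty} \frac{r^{d-1}}{(d-1)!}\,n e^{-r}\,\exp[-n e^{-r}]\,\ddx r.
\]

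\medskip

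\noindent \textbf{Step 4 (crude bound and asymptotics).} Using $\exp[-n e^{-r}] \leq 1$ and the standard incomplete-Gamma asymptotic $\int_R^{\infty} r^{d-1} e^{-r}\,\ddx r \sim R^{d-1} e^{-R}$ as $R \to \infty$, we get
\[
\E M_n \leq \frac{n}{(d-1)!} \int_{\bon}^{\infty} r^{d-1} e^{-r}\,\ddx r
\sim \frac{n\,\bon^{d-1}\,e^{-\bon}}{(d-1)!}.
\]
Plugging in $\bon = \rL n + 2(d-1) \rL_2 n$ gives $e^{-\bon} = n^{-1} (\rL n)^{-2(d-1)}$ and $\bon^{d-1} \sim (\rL n)^{d-1}$, so $\E M_n = O((\rL n)^{-(d-1)})$, as desired.

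\medskip

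This argument is essentially routine once the Slivnyak--Mecke setup is in place; the only thing to verify is that the choice $\bon = \rL n + 2(d-1) \rL_2 n$ is calibrated to make the trivial bound $\exp[-n e^{-r}] \leq 1$ (rather than a finer one accounting for the ``clustering'' suppression) already sufficient for the $O((\rL n)^{-(d-1)})$ rate. That is the mild obstacle, but it is immediate from the exponential inflation factor $e^{-2(d-1)\rL_2 n} = (\rL n)^{-2(d-1)}$ built into $\bon$, which is exactly the order needed to beat the $n (\rL n)^{d-1}$ factor and leave $(\rL n)^{-(d-1)}$.
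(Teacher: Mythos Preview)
Your proof is correct. The route differs slightly from the paper's, though the two converge quickly. The paper observes directly that $E_n$ coincides with the event that the Poisson process has \emph{any} point with $\ell^1$-norm exceeding $\bon$ (since if some point lies there, then by finiteness of the process so does some maximum), and hence computes $\P(E_n) = 1 - \exp[-n\,\P(\Gam(d,1) > \bon)]$ exactly. You instead bound $\P(E_n)$ by the expected number of \emph{maxima} in that region via Slivnyak--Mecke and then discard the factor $\exp[-n e^{-r}]$; after that step your upper bound is precisely the Poisson parameter the paper computes, and the asymptotics match. The paper's argument is a touch more direct (no Mecke formula needed), while yours is a more generic first-moment template that would still work if the equivalence of the two events were less transparent.
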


\begin{proof}
The number of maxima satisfying the $\ell^1$-norm condition is of course simply the number of Poisson points satisfying the same condition, and so is distributed Poisson with parameter
\begin{align*}
\int_{\x \succ {\bf 0}:\| \x \| > \bon} \mu_n(\ddx \x)
&= n\,\P(\Gam(d, 1) > \bon)
= n\,\P(\Po(\bon) < d) \\
&\sim n\,\P(\Po(\bon) = d - 1) \sim \frac{1}{(d - 1)!} (\rL n)^{- (d - 1)}. 
\end{align*}
Thus
\begin{align*}
\P(E_n) 
&= 1 - \exp\left[ - (1 + o(1)) \frac{1}{(d - 1)!} (\rL n)^{- (d - 1)} \right] \\
&\sim \frac{1}{(d - 1)!} (\rL n)^{- (d - 1)} = O((\rL n)^{ - (d - 1)}),
\end{align*}
as claimed.
\end{proof}

The next lemma is used in the proof of~\eqref{TV55}.  Given the convergence of 
$\Var \rho_n(\bzn) - \E \rho_n(\bzn)$ to~$0$ established for fixed~$a$ by~\eqref{mean} and \cite[Lemma 3.1 and Remark B.1]{FNS_2024}, it is not surprising that 
$\Var \Non - \E \Non = o(1)$, but it is nontrivial to prove this (with suitable uniformity in~$a$) and to further quantify $o(1)$.

\begin{lemma}
\label{L:VarNon}
As $n \to \infty$, uniformly for $|a| \leq a_n$ we have
\begin{equation}
\label{VarNon again2}
\Var \Non - \E \Non = O\!\left( (\rL_2 n)^{ - \left( d - \tfrac32 \right)} (\rL_3 n)^{-1} \right).
\end{equation}
\end{lemma}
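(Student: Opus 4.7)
My plan is to use Palm calculus (the Mecke identity for Poisson processes) on $\Pion$ to write both $\E[\Non(\Non - 1)]$ and $(\E \Non)^2$ as double integrals over $A_n \times A_n$ and then analyze their difference. Writing $p(\x) := \exp[-\muon(O^+_\x)]$ for the probability that no point of $\Pion$ lies strictly above $\x$ (as in the proof of \refL{L:two means}), two applications of Mecke to $\Non = \sum_{\x \in \Pion} {\bf 1}(\x \in A_n,\,\x \text{ is maximal in } \Pion)$, combined with the inclusion--exclusion identity $\muon(O^+_\x \cup O^+_\y) = \muon(O^+_\x) + \muon(O^+_\y) - \muon(O^+_{\x \vee \y})$ (valid because $\x, \y \in A_n \subset (0, \infty)^d$), yield
\begin{equation*}
\Var \Non - \E \Non = I_1 - I_2,
\end{equation*}
where both summands are nonnegative,
\begin{align*}
I_1 &:= \iint_{\mathrm{inc.}} p(\x) p(\y)\bigl[ e^{\muon(O^+_{\x \vee \y})} - 1 \bigr]\,\muon(\ddx \x)\muon(\ddx \y), \\
I_2 &:= \iint_{\mathrm{comp.}} p(\x) p(\y)\,\muon(\ddx \x)\muon(\ddx \y),
\end{align*}
the subscripts ``inc.'' and ``comp.'' denoting the incomparable and comparable subsets of $A_n \times A_n$, respectively.

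I then aim to bound each of $I_1$ and $I_2$ by $O((\rL_2 n)^{-(d - 3/2)}(\rL_3 n)^{-1})$ uniformly for $|a| \leq a_n$. For both I will follow the integration scheme used in the proof of \refL{L:limsup fb1}: a Dirichlet-type change of variables writing $\x = z \uu$ and $\y = w \vv$ with $\uu, \vv \in \cS_{d - 1}$, followed by the shift $z = \rL n + s$, $w = \rL n + t$. The radial integrals produce the $(\rL n)^{d - 1} e^{-\bun}$-type factors that cancel against $\gl^2$, and the angular integrals supply the crucial smallness. For $I_2$, the comparability $\x \prec \y$ sharply restricts the angular integration to the region where the unit vector $\|\x\|^{-1}\x$ lies coordinatewise below (a suitable dilation of) $\|\y\|^{-1}\y$, and this region has volume of order $(\rL_2 n / \rL n)^{d - 1}$, just as in the $q_n$ computation of \refL{L:limsup fb1}. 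For $I_1$ I bound $e^{\muon(O^+_{\x \vee \y})} - 1 \leq \muon(O^+_{\x \vee \y})\,e^{\muon(O^+_{\x \vee \y})} \leq n e^{-\| \x \vee \y \|}\,(1 + o(1))$ (the exponential factor being $1 + o(1)$ because $\muon(O^+_{\x \vee \y}) \leq n e^{-\bun} = (\rL_2 n)\go_n$ stays modest), reducing the integrand essentially to that of $J_n$ in \refL{L:limsup fb1} but with an extra radial factor $n e^{-\| \x \vee \y \|}$ that is absorbed into the shifted radial integration.

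The main obstacle I anticipate is handling $I_1$ on the sub-region where $\|\x \vee \y\|$ is close to $\bun$: there the excess factor $e^{\muon(O^+_{\x \vee \y})} - 1$ is of order $(\rL_2 n)\go_n$, not $o(1)$. In this regime the smallness must come from the angular set: the condition $\| \x \vee \y \| = \max(\|\x\|, \|\y\|) + O(1)$ forces the unit directions $\|\x\|^{-1}\x$ and $\|\y\|^{-1}\y$ to lie within distance $O(\rL_2 n / \rL n)$ of each other on the simplex, and the corresponding angular volume $q_n = O((\rL_2 n/\rL n)^{d - 1})$ (from \refL{L:limsup fb1}) beats the logarithmic blow-up of the excess factor. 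Combining these estimates and taking the supremum over $|a| \leq a_n$ will produce the claimed rate.
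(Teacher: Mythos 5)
Your decomposition is exactly the one the paper uses: by Mecke and inclusion--exclusion, $\Var\Non - \E\Non = I_1 - I_2$, where $I_2$ is the integral of $p(\x)p(\y)$ over comparable pairs (the paper's $I_{n,0}$) and $I_1$ is the integral of $p(\x)p(\y)[e^{\muon(O^+_{\x\vee\y})} - 1]$ over incomparable pairs (the paper's $I_n$). Both are nonnegative, so bounding each by the target rate suffices. For $I_2$ your angular-volume heuristic is plausible, though the paper's own proof is cleaner: it changes variables to $(\x, \z)$ with $\y = \x + \z$ and obtains the sharper bound $O((\rL n)^{-(d-1)}(\rL_2 n)^2(\rL_3 n)^{-2})$ without any angular bookkeeping.

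Where your proposal has a real gap is $I_1$. Your intermediate bound
\[
e^{\muon(O^+_{\x\vee\y})} - 1 \leq \muon(O^+_{\x\vee\y})\,e^{\muon(O^+_{\x\vee\y})} \leq n e^{-\|\x\vee\y\|}\,(1 + o(1))
\]
is false: since $\muon(O^+_{\x\vee\y})$ can be as large as roughly $(\rL_2 n)\,\go_n \to \infty$, the factor $e^{\muon(O^+_{\x\vee\y})}$ is not $1 + o(1)$ uniformly on $A_n$---it can be super-polynomially large. Later you acknowledge a ``blow-up'' but misquantify it as being of order $(\rL_2 n)\go_n$, when in fact $e^{\muon(O^+_{\x\vee\y})} - 1$ can be of order $e^{(\rL_2 n)\go_n}$. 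The saving grace is that this exponential is automatically tamed once you keep $e^{\muon(O^+_{\x\vee\y})}$ coupled with $p(\x)p(\y)$, because $O^+_{\x\vee\y} \subseteq O^+_\x \cap O^+_\y$ forces $\muon(O^+_{\x\vee\y}) \leq \min(\muon(O^+_\x), \muon(O^+_\y))$, so $p(\x)p(\y)e^{\muon(O^+_{\x\vee\y})} \leq \exp[-\max(\muon(O^+_\x), \muon(O^+_\y))]$. Your sketch detaches the exponential from the Poisson-decay factors too early and tries to discard it, which does not work. A correct treatment must carry the combined factor $\exp[-ne^{-\|\x\|} - ne^{-\|\y\|} + ne^{-\|\x\vee\y\|}]$ through the radial--angular change of variables. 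This is precisely the nontrivial content the paper handles by matching the integrand with that of \cite[(B.3)]{FNS_2024} and invoking the decomposition into $I_{n,k}$ via \cite[(B.7)]{FNS_2024}; your sketch does not supply an independent substitute for that calculation, and the ``angular volume beats the blow-up'' claim is not a proof---the blow-up is exponential, not logarithmic or polynomial, so volume alone cannot beat it without the compensating Poisson decay you dropped.
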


\begin{proof}
To treat $\Var \Non$ we follow the approach to $\Var \rho_n(b_n)$ in~\cite{FNS_2024} and start with the second factorial moment of $\Non$.  Letting $\x \parallel \y$ denote incomparability of~$\x$ and~$\y$ with respect to $\prec$ (more precisely, that neither~$\x$ weakly dominates~$\y$ coordinate-by-coordinate nor vice versa), we have, by the Mecke equation \cite[Theorem~4.4]{Last_2018},
\begin{align}
\lefteqn{\hspace{-.1in}\E \left[ \Non \left( \Non - 1 \right) \right]} \nonumber \\
&= n^2 \int_{\x, \y \in A_n:\,\x \parallel \y} e^{- (\| \x \| + \| \y \|)} 
\P\!\left( \Pion(O^+_{\x}) = 0\mbox{\ and\  }\Pion(O^+_{\y}) = 0 \right) \dd \x \dd \y \nonumber \\
&= \left( \E \Non \right)^2 - I_{n, 0} + I_n, \label{2fac}
\end{align}
where
\begin{align*}
I_{n, 0} 
&:= n^2 \int_{\x, \y \in A_n:\,\x \not\parallel \y} e^{- (\| \x \| + \| \y \|)} 
\P\!\left( \Pion(O^+_{\x}) = 0 \right) \P\!\left( \Pion(O^+_{\y}) = 0 \right) \dd \x \dd \y, \\
I_n 
&:= n^2 \int_{\x, \y \in A_n:\,\x \parallel \y} e^{- (\| \x \| + \| \y \|)} 
\left[ \P\!\left( \Pion(O^+_{\x}) = 0\mbox{\ and\  }\Pion(O^+_{\y}) = 0 \right) \right. \\ 
&{} \qquad \qquad \qquad \qquad \qquad \qquad \ 
- \left. \P\!\left( \Pion(O^+_{\x}) = 0 \right) \P\!\left( \Pion(O^+_{\y}) = 0 \right) \right] \dd \x \dd \y.
\end{align*}
Clearly $I_{n, 0} \geq 0$, and it is easy to see that the integrand factor in square brackets for $I_n$ is nonnegative, so $I_n \geq 0$, too.
From~\eqref{2fac} it follows that
\[
\Var \Non - \E \Non 
= \E \left[ \Non \left( \Non - 1 \right) \right] - \left( \E \Non \right)^2
= - I_{n, 0} + I_n. 
\]
We will complete the proof of \refL{L:VarNon} by proving that
\begin{align}
I_{n, 0} 
&= O\!\left( (\rL n)^{- (d - 1)} (\rL_2 n)^2 (\rL_3 n)^{-2} \right) \label{In0} \\ 
&= o\!\left( (\rL_2 n)^{ - \left( d - \tfrac32 \right)} (\rL_3 n)^{-1} \right),\mbox{\ uniformly for $|a| \leq a_n$} \nonumber
\end{align}
and
\begin{equation}
\label{In}
I_n = O\!\left( (\rL_2 n)^{ - \left( d - \tfrac32 \right)} (\rL_3 n)^{-1} \right)\mbox{\ uniformly for $|a| \leq a_n$}.
\end{equation}

To prove~\eqref{In0}, we begin by recalling from~\eqref{PPionO+x=0 2} that for $\x \in A_n$ we have
\begin{equation}
\label{P5}
\P\!\left( \Pion(O^+_{\x}) = 0 \right)
= \exp\!\left[ - n e^{- \| \x \|} (1 - u_n(\| \x \|)) \right]\mbox{\ for $\x \in A_n$}
\end{equation}
with
\begin{align*}
u_n(w) 
&\equiv u_{n, a}(w) \\ 
&= O\!\left( (\rL n)^{- 2 (d - 1)} (\rL_2 n)^{d - 2} \right)\mbox{\ uniformly for $w \in (\bun, b_n]$ and $|a| \leq a_n$},
\end{align*}
so 
that
\begin{align*}
I_{n, 0} 
&= 2 n^2 \int_{\x, \y \in A_n:\,\x \prec \y} e^{- (\| \x \| + \| \y \|)} \exp\!\left[ - n e^{- \| \x \|} (1 - u_n(\| \x \|)) \right] \\ 
&{} \qquad \qquad \qquad \qquad {} \times 
\exp\!\left[ - n e^{- \| \y \|} (1 - u_n(\| \y \|)) \right] \dd \x \dd \y \\
&= 2 n^2 \int_{\x \in A_n} \int_{\z \succ {\bf 0}:\,\| \z \| \leq b_n - \| \x \|} 
e^{- \| \x \|} e^{- (\| \x \|) + \| \z \|)} \exp\!\left[ - n e^{- \| \x \|} (1 - u_n(\| \x \|)) \right] \\ 
&{} \qquad \qquad \qquad \qquad \qquad {} \times
\exp\!\left[ - n e^{- (\| \x \| + \| \z \|)} (1 - u_n(\| \x \| + \| \z \|)) \right] \dd \z \dd \x \\
&= 2 n^2 \int_{\bun}^{b_n} \int_0^{b_n - x} 
\frac{x^{d - 1}}{(d - 1)!} e^{- 2 x} \frac{\gz^{d - 1}}{(d - 1)!} e^{- \gz} \exp\!\left[ - n e^{- x} (1 - u_n(x)) \right] \\ 
&{} \qquad \qquad \qquad \qquad {} \times
\exp\!\left[ - n e^{- x} e^{- \gz} (1 - u_n(x + \gz)) \right] \dd \gz \dd x
\end{align*}
From here, change variables using $x = \rL n - \rL y$ and $\gz = - \rL z$ to obtain, with $\han$ defined at~\eqref{han} (we could do slightly but immaterially better by discarding the absolute value signs there),
\begin{align*}
I_{n, 0} 
&= \frac{2}{[(d - 1)!]^2} \int_{\gb_n}^{\gbun} \int_{\gb_n / y}^1
(\rL n - \rL y)^{d - 1} y\,( - \rL z)^{d - 1} \\ 
&{} {} \times
\exp\!\left[ - y (1 - u_n(\rL n - \rL y)) \right] \exp\!\left[ - y z (1 - u_n(\rL n - \rL y - \rL z)) \right] \dd z \dd y \\
&\leq \frac{2 (\rL n)^{d - 1} \exp\!\left[ - (1 - \han) \gb_n \right]}{[(d - 1)!]^2} 
\!\!\int_{\gb_n}^{\gbun}\!\!y \exp\!\left[ - (1 - \han) y \right] \int_{\gb_n / y}^1\!\!\!\!( - \rL z)^{d - 1} \dd z \dd y.
\end{align*}

But for $y \in [\gb_n, \infty)$ we have
\[
\int_{\gb_n / y}^1\!( - \rL z)^{d - 1} \dd z
= \int_0^{\rL y - \rL \gb_n}\!w^{d - 1} e^{-w} \dd w \leq (d - 1)!, 
\]
so, using~\eqref{uny} at the asymptotic equivalence to follow, uniformly for $|a| \leq a_n$ we have
\begin{align*}
I_{n, 0}
&\leq \frac{2 (\rL n)^{d - 1} \exp\!\left[ - (1 - \han) \gb_n \right]}{(d - 1)!} 
\int_{\gb_n}^{\infty}\!\!y \exp\!\left[ - (1 - \han) y \right] \dd y \\
&\sim \frac{2}{(d - 1)!} (\rL n)^{d - 1} \gb_n e^{- 2 \gb_n}
= O\!\left( (\rL n)^{- (d - 1)} (\rL_2 n)^2 (\rL_3 n)^{-2} \right),
\end{align*} 
and~\eqref{In0} is established.

Now we turn our attention to $I_n$ and the proof of~\eqref{In}.  First note that for any~$\x, \y \in A_n$ we have
\begin{align*}
\P\!\left( \Pion(O^+_{\y}) = 0\,|\,\Pion(O^+_{\x}) = 0 \right)
&= \P\!\left( \Pion(O^+_{\y}) = 0\,|\,\Pion(O^+_{\x \vee \y}) = 0 \right) \\
&= \frac{\P\!\left( \Pion(O^+_{\y}) = 0 \right)}{\P\!\left( \Pion(O^+_{\x \vee \y}) = 0 \right)};
\end{align*}
thus
\begin{align*}
I_n 
&= n^2 \int_{\x, \y \in A_n:\,\x \parallel \y} e^{- (\| \x \| + \| \y \|)}
\P\!\left( \Pion(O^+_{\x}) = 0 \right) \P\!\left( \Pion(O^+_{\y}) = 0 \right) \\
&{} \qquad \qquad \qquad \qquad {} \times \left[ \frac{1}{\P\!\left( \Pion(O^+_{\x \vee \y}) = 0 \right)} - 1 \right] \dd \x \dd \y.
\end{align*}
For the first two of the three probabilities appearing in this last expression we use~\eqref{P5}; for the third, we note simply that
\[
\P\!\left( \Pion(O^+_{\x \vee \y}) = 0 \right) \geq \exp(- n e^{- \| \x \vee \y \|}).
\]
We then find
\begin{align*}
I_n 
&\leq n^2 \int_{\x, \y \in A_n:\,\x \parallel \y} e^{- (\| \x \| + \| \y \|)}
\exp\!\left[ - n e^{- \| \x \|} (1 - u_n(\| \x \|)) \right] \\
&{} \qquad \qquad \qquad \quad {} \times \exp\!\left[ - n e^{- \| \y \|} (1 - u_n(\| \y \|)) \right]
\left[ \exp(n e^{- \| \x \vee \y \|}) - 1 \right] \dd \x \dd \y.
\end{align*}
Here, the integrand factors $\exp\!\left[ n e^{- \| \x \|} u_n(\| \x \|) \right]$ and $\exp\!\left[ n e^{- \| \y \|} u_n(\| \y \|) \right]$ are each bounded by 
\[
\exp\!\left( \han \gbun \right) = \exp\!\left[ O\!\left( (\rL n)^{- 2 (d - 1)} (\rL_2 n)^{d - 1} \go_n \right) \right] = 1 + o(1). 
\]
Therefore $I_n$ is bounded by $(1 + o(1))$ times
\begin{align}
\lefteqn{\hspace{.25in}n^2 \int_{\x, \y \in A_n:\,\x \parallel \y} e^{- (\| \x \| + \| \y \|)}
\exp\!\left( - n e^{- \| \x \|} \right)} \nonumber \\
&{} \qquad \qquad \qquad \quad {} \times \exp\!\left( - n e^{- \| \y \|} \right)
\left[ \exp(n e^{- \| \x \vee \y \|}) - 1 \right] \dd \x \dd \y \nonumber \\
&\leq n^2 \int_{\x, \y \succ 0:\,\x \parallel \y,\,\| \x \| \leq b_n,\,\| \y \| \leq b_n} e^{- (\| \x \| + \| \y \|)}
\exp\!\left( - n e^{- \| \x \|} \right) \nonumber \\
&{} \qquad \qquad \qquad \qquad \qquad \qquad \quad {} \times \exp\!\left( - n e^{- \| \y \|} \right)
\left[ \exp(n e^{- \| \x \vee \y \|}) - 1 \right] \dd \x \dd \y. \label{In bound}
\end{align}

After a simple change of variables, \eqref{In bound} is seen to agree with the expression $I_n$ appearing at 
\cite[(B.3)]{FNS_2024}, except that the factor $(n + 2) (n + 1)$ appearing there becomes $n^2$ at~\eqref{In bound} and the restrictions $\x_{\times} \geq e^{- b_{n + 2}}$ and $\y_{\times} \geq e^{- b_{n + 2}}$ on the integral at \cite[(B.3)]{FNS_2024} become our $\x_{\times} \geq e^{- b_n}$ and $\y_{\times} \geq e^{- b_n}$.  
(Here $\z_{\times} := \prod_{j = 1}^d z_j$ for any vector~$\z$.)
We can decompose our $I_n$ as 
$\sum_{k = 1}^{d - 1} \binom{d}{k} I_{n, k}$ in the same fashion as done at \cite[(B.7)]{FNS_2024}, and we similarly find, uniformly for $|a| \leq a_n$, that
\[
I_{n, k} 
= O((\rL n)^{d - 1} \gb_n^{- (d - 1)} e^{- \gb_n}) \\
= O\!\left( (\rL_2 n)^{ - \left( d - \tfrac32 \right)} (\rL_3 n)^{-1} \right).
\]
This establishes~\eqref{In} and completes the proof of the lemma.
\end{proof}



\bibliographystyle{elsarticle-num} 
\bibliography{records.bib}

\begin{thebibliography}{10}
\expandafter\ifx\csname url\endcsname\relax
  \def\url#1{\texttt{#1}}\fi
\expandafter\ifx\csname urlprefix\endcsname\relax\def\urlprefix{URL }\fi
\expandafter\ifx\csname href\endcsname\relax
  \def\href#1#2{#2} \def\path#1{#1}\fi

\bibitem{FNS_2024}
J.~A. Fill, D.~Q. Naiman, A.~Sun,
  \href{https://drops.dagstuhl.de/entities/document/10.4230/LIPIcs.AofA.2024.28}{{Sharpened
  localization of the trailing point of the {P}areto record frontier}}, in:
  C.~Mailler, S.~Wild (Eds.), 35th International Conference on Probabilistic,
  Combinatorial and Asymptotic Methods for the Analysis of Algorithms (AofA
  2024), Vol. 302 of Leibniz International Proceedings in Informatics (LIPIcs),
  Schloss Dagstuhl -- Leibniz-Zentrum f{\"u}r Informatik, Dagstuhl, Germany,
  2024, pp. 28:1--28:21.
\newblock \href {https://doi.org/10.4230/LIPIcs.AofA.2024.28}
  {\path{doi:10.4230/LIPIcs.AofA.2024.28}}.
\newline\urlprefix\url{https://drops.dagstuhl.de/entities/document/10.4230/LIPIcs.AofA.2024.28}

\bibitem{Arnold_1998}
B.~C. Arnold, N.~Balakrishnan, H.~N. Nagaraja,
  \href{https://doi.org/10.1002/9781118150412}{Records}, Wiley Series in
  Probability and Statistics: Probability and Statistics, John Wiley \& Sons,
  Inc., New York, 1998, a Wiley-Interscience Publication.
\newblock \href {https://doi.org/10.1002/9781118150412}
  {\path{doi:10.1002/9781118150412}}.
\newline\urlprefix\url{https://doi.org/10.1002/9781118150412}

\bibitem{Fillboundary_2020}
J.~A. Fill, D.~Q. Naiman, \href{https://doi.org/10.1214/20-ejp492}{The {P}areto
  record frontier}, Electron. J. Probab. 25 (2020) Paper No. 92, 24 pages.
\newblock \href {https://doi.org/10.1214/20-ejp492}
  {\path{doi:10.1214/20-ejp492}}.
\newline\urlprefix\url{https://doi.org/10.1214/20-ejp492}

\bibitem{Fillmulti_2023}
J.~A. Fill, Breaking multivariate records, Electron. J. Probab. 28 (2023)
  1--27.
\newblock \href {https://doi.org/10.1214/23-EJP968}
  {\path{doi:10.1214/23-EJP968}}.

\bibitem{Bai_2005}
Z.-D. Bai, L.~Devroye, H.-K. Hwang, T.-H. Tsai,
  \href{https://doi.org/10.1002/rsa.20053}{Maxima in hypercubes}, Random
  Structures Algorithms 27~(3) (2005) 290--309.
\newblock \href {https://doi.org/10.1002/rsa.20053}
  {\path{doi:10.1002/rsa.20053}}.
\newline\urlprefix\url{https://doi.org/10.1002/rsa.20053}

\bibitem{Baryshnikov_2000}
Y.~Baryshnikov, \href{https://doi.org/10.1007/s004400050002}{Supporting-points
  processes and some of their applications}, Probab. Theory Related Fields
  117~(2) (2000) 163--182.
\newblock \href {https://doi.org/10.1007/s004400050002}
  {\path{doi:10.1007/s004400050002}}.
\newline\urlprefix\url{https://doi.org/10.1007/s004400050002}

\bibitem{Kiefer_1972}
J.~Kiefer, Iterated logarithm analogues for sample quantiles when
  {$p_n\downarrow 0$}, Proceedings of the {S}ixth {B}erkeley {S}ymposium on
  {M}athematical {S}tatistics and {P}robability ({U}niv. {C}alifornia,
  {B}erkeley, {C}alif., 1970/1971), {V}ol. {I}: {T}heory of statistics (1972)
  227--244.

\bibitem{Arratia_1989}
R.~Arratia, L.~Goldstein, L.~Gordon,
  \href{http://links.jstor.org/sici?sici=0091-1798(198901)17:1<9:TMSFPA>2.0.CO;2-X&origin=MSN}{Two
  moments suffice for {P}oisson approximations: {T}he {C}hen-{S}tein method},
  Ann. Probab. 17~(1) (1989) 9--25.
\newline\urlprefix\url{http://links.jstor.org/sici?sici=0091-1798(198901)17:1<9:TMSFPA>2.0.CO;2-X&origin=MSN}

\bibitem{Barbour_1992}
A.~D. Barbour, L.~Holst, S.~Janson, Poisson Approximation, Vol.~2 of Oxford
  Studies in Probability, The Clarendon Press, Oxford University Press, New
  York, 1992, {O}xford {S}cience {P}ublications.

\bibitem{Chung_2001}
K.~L. Chung, A Course in Probability Theory, third edition Edition, Academic
  Press, Inc., San Diego, CA, 2001.

\bibitem{Sun_2025}
A.~Sun, Studies in multivariate {P}areto records, Ph.D. thesis, The Johns
  Hopkins University (2025).

\bibitem{Resnick_2008}
S.~I. Resnick, Extreme Values, Regular Variation and Point Processes, Springer
  Series in Operations Research and Financial Engineering, Springer, New York,
  2008, reprint of the 1987 original.

\bibitem{BillingsleyPM_2012}
P.~Billingsley, Probability and Measure, anniversary edition Edition, Wiley
  Series in Probability and Statistics, John Wiley \& Sons, Inc., Hoboken, NJ,
  2012, with a foreword by Steve Lalley and a brief biography of Billingsley by
  Steve Koppes.

\bibitem{Last_2018}
G.~Last, M.~Penrose, Lectures on the {P}oisson Process, Vol.~7 of Institute of
  Mathematical Statistics Textbooks, Cambridge University Press, Cambridge,
  2018.

\end{thebibliography}





\end{document}